\newcommand{\footremember}[2]{%
    \footnote{#2}
    \newcounter{#1}
    \setcounter{#1}{\value{footnote}}%
}
\newcommand{\footrecall}[1]{%
    \footnotemark[\value{#1}]%
}
\let\footnote=\endnote
\newtheorem{lemma}{Lemma}[section]
\newtheorem{corollary}{Corollary}[section]
\newtheorem{theorem}{Theorem}[section]
\newtheorem{proposition}{Proposition}[section]
\newcommand{\beq}{\begin{equation}}
\newcommand{\eeq}{\end{equation}}
\newcommand{\beqa}{\begin{eqnarray}}
\newcommand{\eeqa}{\end{eqnarray}}
\newcommand{\beqas}{\begin{eqnarray*}}
\newcommand{\eeqas}{\end{eqnarray*}}
\newcommand{\ei}{\end{itemize}}
\newcommand{\ba}{\begin{array}}
\newcommand{\ea}{\end{array}}
\newcommand{\bn}{\begin{eqnarray}}
\newcommand{\en}{\end{eqnarray}}
\newcommand{\bns}{\begin{eqnarray*}}
\newcommand{\ens}{\end{eqnarray*}}
\newcommand{\E}{{\mathbb{E}}}
\newcommand{\bbr}{\Bbb{R}}
\newcommand{\bbe}{\Bbb{E}}
\newcommand{\nn}{\nonumber}
\def\eqnok#1{(\ref{#1})}
\def\argmin{{\rm argmin}}
\def\vgap{\vspace*{.1in}}
\begin{document}




\title{Reinforcement Learning via Parametric Cost Function Approximation for Multistage Stochastic Programming}

\author{Saeed Ghadimi\footremember{uni}{Department of Operations Research and Financial Engineering, Princeton University}\thanks{email: sghadimi@princeton.edu.}
\and
Raymond T. Perkins \footrecall{uni}\thanks{email: raymondp@princeton.edu.}
\and Warren B. Powell \footrecall{uni}\thanks{email: powell@princeton.edu}\\
\date{August 2019}
} 

%
%

\maketitle

{\bf{Abstract:}} The most common approaches for solving stochastic resource allocation problems in the research literature is to either use value functions (``dynamic programming") or scenario trees (``stochastic programming") to approximate the impact of a decision now on the future.  By contrast, common industry practice is to use a deterministic approximation of the future which is easier to understand and solve, but which is criticized for ignoring uncertainty.  We show that a parameterized version of a deterministic lookahead can be an effective way of handling uncertainty, while enjoying the computational simplicity of a deterministic lookahead.  We present the parameterized lookahead model as a form of policy for solving a stochastic base model, which is used as the basis for optimizing the parameterized policy.  This approach can handle complex, high-dimensional state variables, and avoids the usual approximations associated with scenario trees.  We formalize this approach and demonstrate its use in the context of a complex, nonstationary energy storage problem.

{\bf{Keywords:}} Stochastic Optimization, Policy Search, Stochastic Programming, Simulation-based Optimization, Parametric Cost Function Approximation

\section{Introduction}
There has been a long history in industry of using deterministic optimization models to make decisions that are then implemented in a stochastic setting.  Energy companies use deterministic forecasts of wind, solar and loads to plan energy generation (\cite{wallace2003stochastic}); airlines use deterministic estimates of flight times to schedule aircraft and crews (\cite{lan2006planning}); and retailers use deterministic estimates of demands and travel times to plan inventories (\cite{harrison1999multi}).  These models have been widely criticized in the research community for not accounting for uncertainty, which often motivates the use of large-scale stochastic programming models which explicitly model uncertainty in future outcomes (\cite{mulvey1995robust} and \cite{birge2011introduction}). These large-scale stochastic programs, which use scenario trees to approximate future events, have been applied to unit commitment (\cite{jin2011modeling}), hydroelectric planning (\cite{carpentier2015managing}), and transportation (\cite{lium2009study}). These models are computationally very demanding, yet still require a number of approximations and, as a result do not produce optimal policies.

We make the case that these previous approaches ignore the true problem that is being solved, which is always stochastic.  The so-called ``deterministic models'' used in industry are almost always parametrically modified deterministic approximations, where the modifications are designed to handle uncertainty.  Both the ``deterministic models'' and the ``stochastic models'' (formulated using the framework of stochastic programming) are examples of lookahead policies to solve a stochastic optimization problem with the goal of finding the best policy which is typically tested using a simulator, but which may be field tested in an online environment (the real world).

In this paper, we characterize these modified deterministic models as {\it parametric cost function approximations} (CFAs) which puts them into the same category as other parameterized policies that are well known in the research community working on policy search (\cite{ng2000pegasus}, \cite{peshkin2000learning}, \cite{hu2007evolutionary}, \cite{Robots}, and \cite{mannor2003cross}).  A parallel community has evolved under the name simulation-optimization (see the recent edited volume \cite{fu2015handbook}), where powerful tools have been developed based on the idea of taking derivatives of simulations (see the extensive literature on derivatives of simulations covered in \cite{glasserman1991gradient}, \cite{ho1992discrete}, \cite{kushner2003stochastic}, \cite{cao2008stochastic});  a nice tutorial is given in \cite{chau2014simulation}. Much of this literature focuses on derivatives of discrete event simulations with respect to static parameters such as a buffer stock.  Our problem also exhibits static parameters, but in the form of parameterized modifications of constraints in a policy that involves solving a linear program.

Our use of modified linear programs is new to the policy search literature, where ``policies'' are typically parametric models such as linear models (``affine policies''), structured nonlinear models (such as (s,S) policies for inventories) or neural networks~(\cite{HanE16}). There are two dimensions to our approach: the design of the parameterized lookahead model, and the optimization of the parameters so that a particular parameterization performs as well as it can. The process of designing the modifications (in this paper, these modifications always appear in the constraints) requires the same art as the design of any statistical model or parametric policy, a process that requires exploiting the structure of the problem. This paper addresses the second dimension, which is the design of gradient-based search algorithms which are nontrivial in this setting.

This paper formalizes the idea, used for years in industry, that an effective way to solve complex stochastic optimization problems is to shift the modeling of uncertainty from a lookahead approximation, where even deterministic lookahead models can be hard to solve, to the stochastic base model, typically implemented as a simulator but which might also be the real world. Tuning a model in a stochastic simulator makes it possible to handle arbitrarily complex dynamics, avoiding the many approximations (such as two-stage models, scenario trees, exogenous information that is independent of decisions) that are standard in stochastic programming. In the process, we are expanding the range of problems considered by the simulation-optimization community to the entire class of vector-valued multistage stochastic optimization problems considered in the stochastic programming literature.

The CFAs make it possible to exploit structural properties.  For example, it may be obvious that the way to handle uncertainty when planning energy generators in a unit commitment problem is to require extra reserves at all times of the day.  A stochastic programming model encourages this behavior, but the requirement for a manageable number of scenarios will produce the required reserve only when one of the scenarios requires it. In addition, the almost universal use of two-stage approximations (where the future is modeled as a single stage, which means that decisions in the future are allowed to see into the future) underestimates the effect of uncertainty in the future. Imposing a reserve constraint (which is a kind of cost function approximation) allows us to impose the aforementioned requirement at all times of the day, and to tune it under very realistic conditions without any of the approximations required by stochastic programs.

Designing a parametric CFA closely parallels the design of any parametric statistical model, which is part art (creating the model) and part science (fitting the model). In fact, the design of the parameterization and the tuning of the parameters each represent important areas of research.
Most important, the parametric CFA opens up a fundamentally new approach for providing practical tools for solving high-dimensional, stochastic programming problems.  It provides an alternative to classical stochastic programming with its focus on optimizing a stochastic lookahead model which requires a variety of approximations to make it computationally tractable (and even then, it is typically computationally very demanding).

The parametric CFA makes it possible to incorporate problem structure for handling uncertainty. Some examples include: supply chains handle uncertainty by introducing buffer stocks; hospitals can handle uncertainty in blood donations and the demand for blood by maintaining supplies of O-minus blood, which can be used by anyone; and grid operators handle uncertainty in generator failures, as well as uncertainty in energy from wind and solar, by requiring generating reserves. Central to our approach is the ability to manage uncertainty by recognizing effective strategies for responding to unexpected events.  We would argue that this structure is apparent in many settings, especially in complex resource allocation problems. At a minimum, we offer that our approach represents an interesting, and very practical, alternative to stochastic programming.

This paper makes the following contributions. First, we introduce and develop the idea of parameterized cost function approximation as a tool for solving important classes of multistage stochastic programming problems, shifting the focus from solving complex, stochastic lookahead models to optimizing a stochastic base model. This approach is computationally comparable to solving deterministic approximations, with the exception that the parametric modifications have to be optimized, typically in a simulator that avoids the many approximations made in stochastic lookahead models. To the best of our knowledge, this is the first fundamentally new approach for solving the aforementioned class of stochastic optimization problems which is as easy to compute  as a deterministic lookahead.

Second, we provide some theoretical results about the structure of the objective function and the optimal policy of our optimization problem in the CFA approach. In particular, we show that while the objective function is generally nonconvex, it admits special structure under different types of parameterization policies.

Third, we propose a simulation-based optimization algorithm and establish its finite-time rate of covergence for performing the policy search in our nonconvex stochastic optimization problem. Since this algorithm only uses noisy objective values, it gives us more flexibility in choosing the parametric model for the CFA approach.

Finally, we specialize our CFA approach for a complex, nonstationary energy storage problem in the presence of rolling forecasts. Our numerical experiments demonstrate that while our optimization problem is nonconvex, our proposed algorithm, regardless of the quality of the starting point, can find a parameterization policy whose performance is significantly better than a base policy using unmodified rolling forecasts.

Our presentation is organized as follows. The modeling framework and an overview of the different classes of policies are given in Section 2. We then formally introduce the parametric CFA approach in Section 3. Algorithms for optimizing policy parameters together with their convergence properties and some theoretical results about structure of the optimization problem in the CFA approach are presented in Section 4. We then specialize the parametric CFA approach for an energy storage application and present some numerical experiments for solving this problem in Section 5. Finally, we conclude the paper in Section 6.
\section{Canonical Model and Solution Strategies}
Our main problem of interest in this paper is to find a policy $\pi$ that solves
\begin{equation} \label{objective1}
	\min_{\pi \in \Pi} \: \: \mathbb{E}^{\pi} \bigg[ \sum^T_{t=0} C_t(S_t, X^{\pi}_t(S_t)) \: \bigg| \: S_0  \bigg],
\end{equation}
where $S_{t+1} = S^M(S_t,X^\pi_t(S_t),W_{t+1})$. Here, $\Pi$ denotes the set of all possible policies, $S_t$ represents the sate variable, and $X^\pi_t(S_t)$ denotes the decision function (policy) which determines decision variable $x_t\in \mathcal{X}_t \subseteq \bbr^n$. Furthermore, $W_{t+1}$ denotes exogenous information which describes the information that first becomes known at time $t+1$ which may depend on the state $S_t$ and/or the action $x_t$, and $S^M(\cdot)$ denotes the transition function which explicitly describes the relationship between the state of the model at time $t$ and $t+1$.
We state this canonical model because it sets up our modeling framework, which is fundamentally different than the standard paradigm of stochastic programming (for multistage problems). However, it sets the foundation for searching over policies which is fundamental to our approach. We refer interested readers to \cite{ADP_Powell} for more detailed explanation of these elements of sequential stochastic decision problems.

In the rest of this section, we describe the major classes of policies that we can draw from to solve problem \eqnok{objective1}. There are two fundamental strategies for designing policies. The first is policy search, where we search over different classes of functions $f\in{\cal F}$ and different parameters $\theta^f \in \Theta^f$ in each class. Policy search is written as
\[
\min_{\pi =(f,\theta^f) \in ({\cal F},\Theta^f)} \E\left\{\sum_{t=0}^T C_t(S_t,X^\pi_t(S_t|\theta^f)) \big| S_0\right\}.
\]
Policies that can be identified using policy search come in two classes. The first one is policy function approximations (PFAs) including linear or nonlinear models, neural networks, and locally parametric functions. PFAs (using any of a wide range of approximation strategies) have been widely studied in the computer science literature under the umbrella of policy search (see e.g., \cite{sutton1999policy,bertsimas2012power,hadjiyiannis2011efficient,bertsimas2011hierarchy,lillicrap2015continuous,levine2014learning}.  The second one is cost function approximations (CFAs). In this approach, we use parametrically modified costs and constraints that are then minimized.  These are written as
    \bns
    X^{CFA}(S_t|\theta) = \argmin_{x_t\in{\cal X}^\pi(\theta)} {\bar C}^\pi(S_t,x_t|\theta),
    \ens
    where ${\bar C}^\pi(S_t,x_t|\theta)$ and ${\cal X}^\pi(\theta)$ are parametrically modified cost function and set of constraints.
    CFAs are widely used in industry for complex problems such as scheduling energy generation or planning supply chains, but they have not been studied formally in the research literature.

The second strategy is to construct policies based on lookahead models, where we capture the value of the downstream impact of a decision $x_t$ made while in state $S_t$.  An optimal policy can be written
\bn
X^*_t(S_t) = \argmin_{x_t \in {\cal X}_t} \left(C_t(S_t,x_t) + \E \left\{ \min_{\pi\in\Pi} \E \left\{\sum_{t'=t+1}^T C_{t'}(S_{t'},X^\pi_{t'}(S_{t'})) \big| S_{t+1}\right\} \big| S_t,x_t \right\}\right). \label{eq:optimalpolicylookahead}
\en
Equation \eqref{eq:optimalpolicylookahead} is basically Bellman's equation, but it is computable only for very special instances.  Two core strategies for approximating the lookahead portion in \eqref{eq:optimalpolicylookahead} include value function approximations (see e.g., \cite{ADP_Powell,bertsekas2011dynamic,sutton2018reinforcement,powell2004learning}) and direct lookahead approximations (see e.g., \citep{sethi1991theory,camacho2013model,birge2011introduction,donohue2006abridged}). In the former, we approximate the lookahead portion using a value function. When the lookahead problem cannot be reasonably approximated by a value function, the latter can be used to replace the model with an approximation for the purpose of approximating the future.

Policy search, whether we are using PFAs or CFAs, requires tuning parameters in our base objective function \eqref{objective1}.  By contrast, policies based on lookahead approximations depend on developing the best approximation of the future that can be handled computationally, although these still need to be evaluated using \eqref{objective1}.

An often overlooked challenge is the presence of forecasts.  These inherently require some form of lookahead, but are universally ignored when using value function approximations (the forecast would be part of the state variable).  However, stochastic lookahead approximations are typically computationally very demanding. In this paper, we are going to propose a hybrid comprised of a deterministic lookahead which is modified with parameters that have to be tuned using policy search.  This idea has been widely used in industry in an ad-hoc fashion without formal tuning of the parameters.  We develop this idea in the context of a multistage linear program using the context of a complex, time-dependent energy storage problem with time-varying forecasts.

\section{The Parametric Cost Function Approximations}
We extend the concept of policy search to include parameterized optimization problems. The CFA draws on the structural simplicity of deterministic lookahead models and myopic policies, but allows more flexibility by adding tunable parameters. This puts this methodology in the same class as parametric policy function approximations widely used in the policy search literature, with the only difference that our parameterized functions are inside an optimization problem, making them more useful for high dimensional problems.

\subsection{Basic Idea}
Since the idea of a parametric CFA is new, we begin by outlining the general strategy and then demonstrate how to apply it for our energy storage problem in Section~\ref{sec_energy}.
For our work, we only consider parameterizing the constraints which can be written as
\begin{equation} \label{optimal_policy}
X^{\pi}_t(S_t | \theta) = \argmin_{x_t \in \mathcal{X}_t} \: \: C_t(S_t, x_t), \qquad \text{s.t.} \qquad A_tx_t \le b_t + D\theta^b,
\end{equation}
where $\theta^b$ is the vector of constraint parameters and $D$ is a scaling matrix. While the parametric terms can also be added to the cost function, we only consider parameterization of the constraints for our energy storage application in Section~\ref{sec_energy}.

Whether the parameterizations are in the objective function, or in the constraints, the specification of a parametric CFA parallels the specification of any statistical model (or policy).  The structure of the model is the ``art'' that draws on the knowledge and insights of the modeler.  Finding the best CFA, which involves finding the best $\theta$, is the science which draws on the power of classical search algorithms. This paper focuses on designing algorithms for finding $\theta$ for a given parameterization.

\subsection{A hybrid Lookahead-CFA policy}
There are many problems that naturally lend themselves to a lookahead policy (for example, to incorporate a forecast or to produce a plan over time), but where there is interest in making the policy more robust than a pure deterministic lookahead using point forecasts.  For this important class, we can create a hybrid policy where a deterministic lookahead has parametric modifications that have to be tuned using policy search. When parameters are applied to the constraints it is possible to incorporate easily recognizable problem structure.  For example, a supply chain management problem can handle uncertainty through buffer stocks, while an airline scheduling model might handle stochastic delays using schedule slack.  A grid operator planning energy generation in the future might schedule reserve capacity to account for uncertainty in forecasts of demand, as well as energy from wind and solar.  As with all policy search procedures, there is no guarantee that the resulting policy will be optimal unless the parameterized space of policies includes the optimal policy. However, we can find the optimal policy within the parameterized class, which may reflect operational limitations.  We note that while parametric CFAs are widely used in industry, optimizing within the parametric class is not done.

\subsection{Structure of the cost function approximation}
Assume that a lookahead policy is given as
\begin{equation}
	X^{\text{D-LA}}_t(S_t) = \argmin_{x_t, (\tilde{x}_{t,t'}, t'=t+1, \ldots, \bar T)} \: \: \left( c_t x_{t} + \:  \sum^{\bar T}_{t' = t+1} \tilde{c}_{t,t'} \tilde{x}_{t,t'} \right), \label{energy_LA}
\end{equation}
where $\bar T = \min(t+H, T)$, $\tilde{S}_{t,t'+1} = \tilde{S}^M(\tilde{S}_{t,t'}, \tilde{x}_{t,t'}, \tilde W_{t,t'+1})$, and $H$ is the size of the lookahead horizon. If the cost function, transition function, and constraints of $X^{\pi}(\cdot)$ are all linear, this policy can be expressed as the following linear program\\
\[
X^{\text{D-LA}}_t(S_t) = \argmin_{x_t, \tilde{x}_{t,t'}, t'=t+1, \bar T} \left\{c_t x_t + \tilde{c}_t \tilde{x}_t
		: A_t x_t \leq b_t, \: \tilde{A}_{t} \tilde x_t \leq \tilde{b}_t, \:\: x_t, \tilde x_t \geq 0  \right\},
\]
where $\tilde{a}_t = \{\tilde{a}_{t,t'} \: : \: t'=t+1,..., \bar T\}$ for any vector/matrix $\tilde{a}_{t,t'}$.
Parametric terms can be appended to the cost function and existing constraints, and new ones can also be added to the existing model. Often the problem setting will influence how the policy should be parameterized. In particular, there are different ways to parameterize the above-mentioned policy including parameterizing the cost vector, the A-matrix, and the right hand side vector. In particular, assuming that all the uncertainty is restricted to the right hand side constraints, we can parameterize the vector $b_t$ such that the parameterized policy becomes\\
\[
X^{\text{LA-CFA}}_t(S_t|\theta) =  \argmin_{x_t, \tilde{x}_{t,t'}, t'=t+1, \bar T} \left\{c_t x_t + \tilde{c}_t \tilde{x}_t
		: A_t x_t \leq b_t(\theta), \: \tilde{A}_{t} \tilde x_t \leq \tilde{b}_t(\theta), \:\: x_t,\tilde x_t \geq 0\right\},
\]
where $\theta$ is a vector of tunable parameters. Parametric modifications can be designed specifically to capture the structure of a particular policy. For storage problems, the idea of using buffers and inventory constraints to manage storage is intuitive and easily incorporated into a deterministic lookahead. In particular, a lower buffer guarantees the decision maker will always have access to some stored quantities. Conversely, an upper threshold will make sure some storage space remains available for unexpected orders. Hence, representing the approximated future storage level at time $t'$ given the information available at time $t$ by $R_{t,t'}$, we can have $\theta^L \leq R_{t,t'} \leq \theta^U \text{ for } t' > t$. Although it can greatly increase the parameter space, the upper and lower bounds can also depend on $t'-t$, as in
\begin{equation}\label{TimeIN}
	\theta^L_{t'-t} \leq R_{t,t'} \leq \theta^U_{t'-t} \text{ for } t' > t.
\end{equation}
The resulting modified deterministic problem is no harder to solve than the original deterministic problem (where $\theta^L = 0$ and $\theta^U = R^{max}$). We now have to use stochastic search techniques to solve the policy search problem to optimize $\theta$.

There are also different policies for parameterizing the right hand side adjustment. A simple form is a lookup table indexed by $t'-t$ as in equation (\ref{TimeIN}). Although it may be simple, a lookup table model for $\theta$ means that the dimensionality increases with the horizon which can complicate the policy search process.
This type of parameterization is not limited to just modifying the point estimate of exogenous information. If the modeler has sufficient information such as the cumulative distribution function, one can even exchange the point estimate with the quantile function. The lookup table in time parameterization is best if the relationship between parameters in different periods is unknown.

Instead of having an adjustment $\theta_\tau = \theta_{t'-t}$ for each time $t+\tau$ in the future, one can use instead a parametric function of $\tau$, which reduces the number of parameters that we have to estimate. For example, we might use the parametric adjustment given by
\[
	\theta_1^L\cdot e^{\tau \theta_2^L } \leq R_{t,t'} \leq \theta_1^U \cdot e^{\tau \theta_2^U} \text{ for } t' > t.
\]
These parametric functions of time can also be used to directly modify the approximated future in the lookahead model.

We should point out that the parameterization scheme can be more general to better capture the underlying uncertainty in the model. In particular, one can also use the parameterized forms of $b(\theta(\sigma))$, $c(\theta(\sigma))$, $A(\theta(\sigma))$, where $\sigma$ represents an estimated variance of noise corresponding to the all sources of uncertainty. We let $\sigma = 0$ correspond to the case of having perfect information about the future over the horizon, which is the same as assuming that the base model is deterministic. In this case, the parameterized model should satisfy $b_t(\theta(0))=b_t$, $c_t(\theta(0))=c_t$, $A_t(\theta(0))=A_t$. An important example of parameterization would be affine with the general form of $b_t(\theta) = \theta^b_0 + \theta^b_1 b_t$ (with similar modifications for the elements of the matrix A and the cost vector c). If $\sigma = 0$, we would use $\theta^b_0 = 0$ and $\theta^b_1 = 1$. Finding the proper choice of parameterization is an art of modeling the problem which represents its own research challenges within the CFA approach and is beyond the scope of this paper.

\section{Optimizing the parameters of the CFA model}
To tune our parameterized policy in the CFA model, we need to solve
\begin{equation}\label{cum_reward}
	\min_{\theta \in \Theta} \: \left\{F(\theta):= \mathbb{E}_\omega\left[\bar{F}(\theta, \omega) \right]= \mathbb{E}\left[ \sum_{t=0}^T C_t \bigg(S_{t}(\omega),X^\pi_{t}(S_{t}(\omega),\theta) \bigg) \bigg|S_0 \right]\right\},
\end{equation}
where
$S_{t+1}(\omega) = S^M(S_t(\omega), X^{\pi}_t(S_t(\omega),\theta), W_{t+1}(\omega))$ for every $\omega \in \Omega$. If $F(\theta)$ is well-defined, finite-valued, convex, and continuous at every $\theta$ in the nonempty, closed, and convex set $\Theta \subset \mathbb{R}^n$, then an optimal $\theta^* \in \Theta$ exists and can be found by stochastic approximation (SA) algorithms. However, when $F(\theta)$ is possibly nonconvex, SA-type methods can be modified to find stationary points of the above problem. {\color{black} In Subsection~\ref{CFA_model1}, we propose the classical SA algorithm and discuss its convergence properties when a noisy gradient of the objective function in \eqnok{cum_reward} is available. We then provide a randomized SA algorithm in Subsection~\ref{CFA_model1} which only requires noisy observations of the objective function.

\subsection{Stochastic Gradient method for optimizing the CFA model}\label{CFA_model1}
Our goal in this subsection is to solve problem \eqnok{cum_reward} under specific assumptions on $F(\theta)$. Stochastic approximation algorithms require computing stochastic (sub)gradients of the objective function iteratively. Due to the special structure of $F(\theta)$, its (sub)gradient can be computed recursively under certain conditions as shown in the next result.}

\begin{proposition}\label{prop_grad}
	Assume $\bar{F}(\cdot, \omega)$ is convex/concave for every $\omega \in \Omega$, $\theta$ is an interior point of $\Theta$, and $F(\cdot)$ is finite valued in the neighborhood of $\theta$. If distribution of $\omega$ is independent of $\theta$, we have
	\begin{equation*}
		\nabla_{\theta}F(\theta) = \mathbb{E}[\nabla_{\theta}\bar{F}(\theta, \omega)],
	\end{equation*}
	where
	\begin{equation}\label{eq:stoch_gradient}
		\nabla_{\theta}\bar{F}(\theta) = \bigg(\frac{\partial C_0}{\partial X^\pi_0} \cdot \frac{\partial X^\pi_0}{\partial \theta} \bigg) + \sum^T_{t=1} \bigg[ \bigg( \frac{\partial C_{t}}{\partial S_{t}} \cdot \frac{\partial S_{t}}{\partial \theta} \bigg) + \bigg( \frac{\partial C_{t}}{\partial X^\pi_{t}} \cdot \bigg( \frac{\partial X^\pi_{t}}{\partial S_{t}} \cdot \frac{\partial S_{t}}{\partial \theta} + \frac{\partial X^\pi_{t}}{\partial \theta} \bigg) \bigg) \bigg], \\
        \end{equation}
        \begin{equation*}
\text{and} \qquad  	\frac{\partial S_{t}}{\partial \theta} = \frac{\partial S_{t}}{\partial S_{t-1}} \cdot \frac{\partial S_{t-1}}{\partial \theta} + \frac{\partial S_{t}}{\partial X^\pi_{t-1}} \cdot \bigg[ \frac{\partial X^\pi_{t-1}}{\partial S_{t-1}} \cdot \frac{\partial S_{t-1}}{\partial \theta} + \frac{\partial X^\pi_{t-1}}{\partial \theta} \bigg],
        \end{equation*}
in which the $\omega$ is dropped for simplicity.
\end{proposition}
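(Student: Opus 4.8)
The plan is to split the claim into two logically independent pieces and combine them only at the end: first the interchange $\nabla_\theta F(\theta) = \mathbb{E}[\nabla_\theta \bar F(\theta,\omega)]$, and second the explicit chain-rule expansion of $\nabla_\theta \bar F(\theta)$ along a fixed sample path $\omega$. These call for quite different tools, so I would establish them separately.

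For the interchange, the key structural input is that $\bar F(\cdot,\omega)$ is convex for every $\omega$ (the concave case is identical after a sign change), so $F=\mathbb{E}[\bar F]$ is itself convex and, being finite on a neighborhood of the interior point $\theta$, is locally Lipschitz with well-defined one-sided directional derivatives there. I would write the coordinatewise difference quotient $\frac{F(\theta+he_i)-F(\theta)}{h} = \mathbb{E}\!\left[\frac{\bar F(\theta+he_i,\omega)-\bar F(\theta,\omega)}{h}\right]$, which is legitimate precisely because the law of $\omega$ does not depend on $\theta$ (the single most important hypothesis here, since it means only the integrand, not the measure, is differentiated). Convexity makes each integrand difference quotient monotone in $h$ and sandwiched between subgradient values at nearby points, and finiteness of $F$ on a neighborhood makes those bounding subgradients integrable. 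This furnishes the domination needed to pass $h\to 0$ inside the expectation by monotone/dominated convergence, yielding $\partial_{\theta_i} F(\theta) = \mathbb{E}[\partial_{\theta_i}\bar F(\theta,\omega)]$ wherever the inner derivative exists. This is the standard interchangeability-of-differentiation-and-integration result for convex integrands.

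For the chain-rule expansion I would simply differentiate $\bar F(\theta,\omega)=\sum_{t=0}^T C_t(S_t, X^\pi_t(S_t,\theta))$ through the dependency graph dictated by the dynamics: $X^\pi_t$ depends on both $S_t$ and $\theta$, while $S_t$ depends on $\theta$ only through the recursion $S_t = S^M(S_{t-1},X^\pi_{t-1}(S_{t-1},\theta),W_t)$. Differentiating term by term, the $t=0$ term has $S_0$ fixed (we condition on $S_0$), so only the direct policy channel survives and contributes $\frac{\partial C_0}{\partial X^\pi_0}\cdot\frac{\partial X^\pi_0}{\partial \theta}$; for each $t\ge 1$ both the indirect channel through $S_t$ and the direct channel through $\theta$ contribute, reproducing the bracketed summand. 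The stated recursion for $\frac{\partial S_t}{\partial \theta}$ then follows by applying the multivariate chain rule to the transition $S^M$, again splitting the dependence of $X^\pi_{t-1}$ into its $S_{t-1}$ and $\theta$ parts. This piece is pure bookkeeping once one adopts the convention that all partials are Jacobians evaluated along the realized path.

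The genuine difficulty, and where I would spend the most care, is the differentiability of the policy map $X^\pi_t(S_t,\theta)$ itself. Because $X^\pi_t$ is the $\argmin$ of a (in the application, linear) program with $\theta$ entering the right-hand side, it is only piecewise smooth in $(S_t,\theta)$: constant or affine on polyhedral cells and developing kinks exactly where the optimal basis/active set changes, so $\frac{\partial X^\pi_t}{\partial \theta}$ and $\frac{\partial X^\pi_t}{\partial S_t}$ fail to exist on the measure-zero boundaries between cells. The convexity hypothesis is what rescues the statement: by Rademacher's theorem $\bar F(\cdot,\omega)$ is differentiable almost everywhere, the chain-rule formula is valid on the full-measure set where the argmin map is locally smooth, and on the exceptional set one reads the identities in the subgradient sense, the interchange of the first part holding for the whole subdifferential $\partial F(\theta)=\mathbb{E}[\partial_\theta\bar F(\theta,\omega)]$. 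Making this reduction rigorous --- arguing that the non-differentiability set is negligible and obstructs neither the chain rule nor the expectation interchange --- is the crux of the proof.
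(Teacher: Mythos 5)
Your proposal is correct and follows essentially the same route as the paper: an interchange of differentiation and expectation justified by convexity/concavity of $\bar F(\cdot,\omega)$, interiority of $\theta$, and local finiteness of $F$ (which the paper obtains by citing Strassen (1965) rather than proving via monotone difference quotients as you do), followed by the same term-by-term chain-rule expansion through the state recursion. Your added discussion of the piecewise-smooth argmin map and the subgradient reading on the non-differentiability set is a more careful treatment of a point the paper only acknowledges in the remark immediately after its proof.
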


\begin{proof}
If $\bar{F}(\cdot, \omega)$ is convex or concave for every $\omega \in \Omega$, $\theta$ is an interior point of $\Theta$, and $F(\cdot)$ is finite valued in the neighborhood of $\theta$, then we have $\nabla_{\theta} \left(\mathbb{E} \left[\bar{F}(\theta,\omega)\right]\right) = \mathbb{E} \left[\nabla_{\theta} \bar{F}(\theta,\omega)\right]$ by \cite{Stra65}. Applying the chain rule, we find
	\begin{equation*}
	\begin{split}
	 \nabla_{\theta}\bar{F}(\theta) &= \frac{d}{d \theta} \bigg[ C_0(S_0, X^\pi_0) + \sum^T_{t=1} C(S_{t},X^\pi_{t}) \bigg]
                        		= \bigg(\frac{\partial C_0}{d X^\pi_0} \cdot \frac{d X^\pi_0}{\partial \theta} \bigg) + \bigg[  \sum^T_{t=1} \frac{d}{d \theta} \: C(S_{t},X^\pi_{t}) \bigg] \\
                        		&= \bigg(\frac{\partial C_0}{d X^\pi_0} \cdot \frac{d X^\pi_0}{\partial \theta} \bigg) + \sum^T_{t=1} \bigg[ \bigg( \frac{\partial C_{t}}{\partial S_{t}} \cdot \frac{\partial S_{t}}{\partial \theta} \bigg) + \bigg( \frac{\partial C_{t}}{\partial X^\pi_{t}} \cdot \frac{d X^\pi_{t}}{d \theta} \bigg) \bigg] \\
                        		&= \bigg(\frac{\partial C_0}{d X^\pi_0} \cdot \frac{d X^\pi_0}{\partial \theta} \bigg) + \sum^T_{t=1} \bigg[ \bigg( \frac{\partial C_{t}}{\partial S_{t}} \cdot \frac{\partial S_{t}}{\partial \theta} \bigg) + \bigg( \frac{\partial C_{t}}{\partial X^\pi_{t}} \cdot \bigg( \frac{\partial X^\pi_{t}}{\partial S_{t}} \cdot \frac{\partial S_{t}}{\partial \theta} + \frac{\partial X^\pi_{t}}{\partial \theta} \bigg) \bigg) \bigg], \\
                        \end{split}
                        \end{equation*}
where $\tfrac{\partial S_{t}}{\partial \theta} = \tfrac{\partial S_{t}}{\partial S_{t-1}} \cdot \tfrac{\partial S_{t-1}}{\partial \theta} + \tfrac{\partial S_{t}}{\partial X^\pi_{t-1}} \big[ \tfrac{\partial X^\pi_{t-1}}{\partial S_{t-1}} \cdot \tfrac{\partial S_{t-1}}{\partial \theta} + \tfrac{\partial X^\pi_{t-1}}{\partial \theta} \big]$.
\end{proof}
Note that if $\bar F(\theta)$ is not differentiable, then its subgradient can be still computed using \eqnok{eq:stoch_gradient}. However, when $\bar F(\theta)$ is not convex (concave), its subgradient may not exist and the concept of generalized subgradient should be employed.

If $\nabla_{\theta}\bar{F}(\theta,\omega)$ exists for every $\omega \in \Omega$, the ability to calculate its unbiased estimator allows us to use stochastic approximation techniques to determine the optimal parameters, $\theta^*$, of the CFA policy model. Below is an iterative SA algorithm for optimizing the CFA model.
   \begin{algorithm}[H]
   \caption{The stochastic gradient method for the CFA model (SG-CFA)}
   \label{CFA_Grad}
   \begin{algorithmic}[1]
    	\STATE Input: $\theta^0$, $N$.\\

{\bf For $n=1,2,\ldots, N$:}

\vgap

{\addtolength{\leftskip}{0.2in}
						\STATE Generate a trajectory $\omega^{n}$ where
							\[
S^n_{t+1}(\omega^n) = S^M(S^n_t(\omega^n), X_t^\pi(S^n_t(\omega^n),\theta^{n-1}),W_{t+1}(\omega^n)) \qquad t=0,1,\ldots,T-1,
\]
						\STATE Compute the gradient estimator using equation \eqref{eq:stoch_gradient}. 
							
						\STATE Update policy parameters as
							\begin{equation}
								\theta^n = \theta^{n-1} - \alpha_{n-1}\nabla_{\theta}\bar{F}(\theta^{n-1},\omega^n)|_{\theta = \theta^{n-1}}
							\end{equation}
}
{\bf End For}
    \end{algorithmic}
    \end{algorithm}

For the convergence of the above algorithm, we need to assume  the following conditions (\cite{robbins1951stochastic}):
\begin{itemize}
\item [a)] The stochastic subgradient $g^n$ computed at the $n$-th iteration satisfies\\
$\mathbb{E}\bigg[g^{n+1} ({\theta}^{n} - \theta^*) \bigg| \mathcal{F}^n\bigg] \le 0$, and $\|g^n\| \leq B_g$ a.s.,
\item [b)] For any $\theta$ where $|\theta - \theta^*| > \delta> 0$, there exists $\epsilon > 0$ such that $\|\mathbb{E}[g^{n+1}|\mathcal{F}^n]\| > \epsilon$.
\end{itemize}
Furthermore, we need to assume that stepsizes $\alpha_n$ satisfy  
	{\begin{equation}\label{eq:SG_con1}
		\alpha_n > 0 \: \: \text{a.s.}, \qquad \qquad  \sum^{\infty}_{n=0} \alpha_n = \infty \: \: \text{a.s.}, \qquad \qquad
		\E \left[\sum^{\infty}_{n=0} \alpha^2_n \right]< \infty.
	\end{equation}

If the above conditions hold, $F(\cdot)$ is continuous and finite valued in the neighborhood of every $\theta$, in the nonempty, closed, bounded, and convex set $\Theta \subset \mathbb{R}^d$ such that $\bar{F}(\cdot,\omega)$ is convex for every $\omega \in \Omega$ where $\theta$ is an interior point of $\Theta$, then $\lim_{n \rightarrow \infty} \: \theta^n \longrightarrow \theta^* \: \: \text{a.s}$. Although any stepsize rule that satisfies the previous conditions will guarantee asymptotic convergence, we prefer parameterized rules that can be tuned for quicker convergence rates. Therefore, we limit our evaluation of the algorithm to how well it does within $N$ iterations. The CFA Algorithm can be described as a policy, $\theta^{\pi}(S^n)$, with a state variable, $S^n = \theta^n$ plus any parameters needed to compute the stepsize policy, and where $\pi$ describes the structure of the stepsize rule. If $\theta^{\pi,n}$ is the estimate of $\theta$ using stepsize rule $\pi$ after $n$ iterations, then our goal is to find the rule that produces the best performance (in expectation) after we have exhausted our budget of $N$ iterations.  Thus, we wish to solve $\min_{\pi} \mathbb{E} \left[\bar{F}(\theta^{\pi,N},\omega)\right]$ i.e., finding the best stepsize rule that minimizes the terminal cost within $N$ iterations. For our numerical experiments, we use two well-known stepsize policies, namely, the adaptive gradient algorithm (AdaGrad) \cite{duchi2011adaptive} and the Root Mean Square Propagation (RMSProp) \cite{TieHin12}, shown to perform well in practice. In particular, the AdaGrad modifies the individual stepsize for each coordinate of the updated parameter, $\theta$, based on previously observed stochastic gradients using
	\[
		\alpha_{n,j} =  \frac{\eta}{\sqrt{G_{n}(j,j) + \epsilon}}, \ \  j=1,\ldots,d,
	\]
	where $\eta>0$ is a scalar learning rate, $G_n \in \mathbb{R}^{d \times d}$ is a diagonal matrix where each diagonal element is the sum of the squares of the stochastic gradients with respect to $\theta$ up to the current iteration $n$, and $\epsilon>0$ avoids division by zero. On the other hand, the RMSProp uses a scaler stepsize based on a running average of previously observed stochastic gradients to scale the current stepsize as
\[
\alpha_n =  \frac{\eta}{\sqrt{\bar g^{n}}}, \qquad \bar g_{n}=\beta \bar g^{n-1}+(1-\beta)\|g^n\|^2,
\]
where $\eta$ is the learning rate and $\beta \in (0,1)$ is the running weight.

Note that $F(\theta)$ can be generally nonsmooth and nonconvex and hence, its subgradient does not exist everywhere. While one can define generalized subgradients for this function, one can also define a smooth approximation of $F(\theta)$ and then try to apply a stochastic approximation algorithm to minimize this function. {\color{black}We pursue this idea in the next subsection.

\subsection{Stochastic Gradient-free method for optimizing the CFA model}\label{CFA_model2}
As mentioned in the previous subsection, the gradient of $F(\theta)$ in \eqnok{eq:stoch_gradient} is only computable under restricted conditions. However, noisy values of $F(\theta)$ can be obtained through simulation. This motivates using techniques from simulation-based optimization where even the shape of the function may not be known (see e.g., \cite{fu2015handbook} and the references therein). In this subsection, we provide a zeroth-order SA algorithm and establish its finite-time convergence analysis to solve problem \eqnok{cum_reward}. For simplicity, we allow $\theta$ to take arbitrary values i.e, $\Theta =\bbr^d$ throughout this subsection.}

A smooth approximation of the function $F(\theta)$ can be defined as the following convolution
\beq \label{rand_smooth_func}
F_{\eta}(\theta) = \frac{1}{(2 \pi)^{\frac{d}{2}}} \int F(\theta+\eta v) e^{-\frac{1}{2}\|v\|^2} \,dv =\bbe_v[F(\theta+\eta v)].
\eeq
where $\eta>0$ is the smoothing parameter and $v \in \bbr^d$ is a Gaussian random vector whose mean is zero and covariance is the identity matrix. \cite{NesSpo17} provide the following result about the properties of $F_{\eta}(\cdot)$.

\begin{lemma} \label{smth_approx}
The following hold for any Lipschitz continuous function $F$ with constant $L_0$.
\begin{itemize}
\item [a)] The function $F_\eta$ is differentiable and its gradient is given by
\beq \label{smth_approx_grad}
\nabla F_{\eta}(\theta) = \frac{1}{(2 \pi)^{\frac{d}{2}}} \int \tfrac{F(\theta+\eta v)-F(\theta)}{\eta} v e^{-\frac{1}{2}\|v\|^2} \,dv,
\eeq
\item [b)] The gradient of $F_\eta$ is Lipschitz continuous with constant $L_{\eta} = \frac{\sqrt{d}}{\eta}L_0$, and for any $\theta \in \bbr^d$, we have
\beqa \label{rand_smth_close}
|F_{\eta}(\theta)-F(\theta)| &\le& \eta L_0 \sqrt{d},\\\
\bbe_v [\|[F(\theta+\eta v)-F(\theta)]v\|^2] &\le& \eta^2 L_0^2(d+4)^2.\label{bnd_grad}
\eeqa
\end{itemize}

\end{lemma}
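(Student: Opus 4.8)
The plan is to treat $F_\eta$ as the convolution of $F$ with a smooth Gaussian kernel and to exploit the smoothness of that kernel rather than any smoothness of $F$. For part (a), I would first substitute $u=\theta+\eta v$ to rewrite $F_\eta(\theta)=(2\pi)^{-d/2}\eta^{-d}\int F(u)\,e^{-\|u-\theta\|^2/(2\eta^2)}\,du$. Since $F$ is $L_0$-Lipschitz it grows at most linearly, while the kernel and all of its $\theta$-derivatives decay super-polynomially, so dominated convergence justifies differentiating under the integral sign. Using $\nabla_\theta e^{-\|u-\theta\|^2/(2\eta^2)}=\eta^{-2}(u-\theta)\,e^{-\|u-\theta\|^2/(2\eta^2)}$ and substituting back $v=(u-\theta)/\eta$ gives $\nabla F_\eta(\theta)=(2\pi)^{-d/2}\eta^{-1}\int F(\theta+\eta v)\,v\,e^{-\|v\|^2/2}\,dv$. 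Finally, because $v\mapsto v\,e^{-\|v\|^2/2}$ is odd we have $\mathbb{E}_v[v]=0$, so subtracting the term $\eta^{-1}F(\theta)\,v$ inside the integral changes nothing and produces exactly \eqref{smth_approx_grad}; this subtraction is what later makes the integrand controllable by Lipschitz continuity.

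For the closeness estimate in part (b), I would write $F_\eta(\theta)-F(\theta)=\mathbb{E}_v[F(\theta+\eta v)-F(\theta)]$, bound the integrand pointwise by $L_0\eta\|v\|$, and then apply Jensen's inequality with $\mathbb{E}_v[\|v\|]\le(\mathbb{E}_v[\|v\|^2])^{1/2}=\sqrt{d}$ to obtain $|F_\eta(\theta)-F(\theta)|\le \eta L_0\sqrt{d}$.

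For the Lipschitz constant of $\nabla F_\eta$, I would use the $v$-representation from part (a), with the $F(\theta)$ term dropped (legitimate since $\mathbb{E}_v[v]=0$), to write for any $\theta_1,\theta_2$
\[
\nabla F_\eta(\theta_1)-\nabla F_\eta(\theta_2)=\frac{1}{\eta}\,\mathbb{E}_v\big[\,(F(\theta_1+\eta v)-F(\theta_2+\eta v))\,v\,\big].
\]
Lipschitz continuity gives $|F(\theta_1+\eta v)-F(\theta_2+\eta v)|\le L_0\|\theta_1-\theta_2\|$ uniformly in $v$, so the norm is at most $\eta^{-1}L_0\|\theta_1-\theta_2\|\,\mathbb{E}_v[\|v\|]\le \eta^{-1}L_0\sqrt{d}\,\|\theta_1-\theta_2\|$, yielding $L_\eta=\sqrt{d}\,L_0/\eta$. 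For the last bound I would note $\|[F(\theta+\eta v)-F(\theta)]\,v\|^2=(F(\theta+\eta v)-F(\theta))^2\|v\|^2\le L_0^2\eta^2\|v\|^4$, and then invoke the Gaussian fourth-moment identity $\mathbb{E}_v[\|v\|^4]=d(d+2)\le(d+4)^2$, which delivers the stated $\eta^2L_0^2(d+4)^2$.

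The only genuinely nontrivial point is rigorously justifying the interchange of differentiation and integration in part (a); once that is in hand, everything reduces to pointwise Lipschitz estimates and the elementary moment identities $\mathbb{E}_v[\|v\|^2]=d$ and $\mathbb{E}_v[\|v\|^4]=d(d+2)$ for the standard Gaussian. The slight slack in the final inequality (using $(d+4)^2$ rather than the exact $d(d+2)$) is harmless and keeps the constant in the clean form used downstream.
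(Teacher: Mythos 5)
Your argument is correct and complete: the interchange of differentiation and integration is legitimately justified by the linear growth of the Lipschitz $F$ against the Gaussian kernel, the zero-mean trick $\mathbb{E}_v[v]=0$ gives \eqref{smth_approx_grad}, and the three bounds in part (b) follow exactly as you say from the moment identities $\mathbb{E}_v[\|v\|^2]=d$ and $\mathbb{E}_v[\|v\|^4]=d(d+2)\le (d+4)^2$. Note that the paper itself supplies no proof of this lemma --- it is quoted directly from Nesterov and Spokoiny (2017) --- and your derivation is essentially the standard one from that reference, recovering the same constants $L_\eta=\sqrt{d}L_0/\eta$, $\eta L_0\sqrt{d}$, and $\eta^2L_0^2(d+4)^2$.
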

{\color{black}We now present an SA-type algorithm only using noisy values of $F$ to solve problem \eqnok{cum_reward}.}

\begin{algorithm}[!htbp]
   \caption{The stochastic gradient-free method for the CFA model (SGF-CFA)}
   \label{CFA_Grad_F}
   \begin{algorithmic}[1]
    	\STATE Input: $\theta^0 \in \bbr^d$, an iteration limit $N$, a positive sequence convergent to zero $\{\eta_k\}_{k \ge 1}$, and a probability mass function (PMF) $P_R(\cdot)$ supported on $\{1,\dots,N\}$ {\color{black}given by

\beq \label{def_PR}
P_R(R=k) = \frac{\alpha_k}{\sum_{k'=1}^N \alpha_{k'}} \qquad k =1,\ldots,N.
\eeq
}		

{\bf For $k=1,\ldots, N$:}


{\addtolength{\leftskip}{0.2in}

		\STATE Generate a trajectory $\omega^{k}$ where $S^k_{t+1}(\omega^k) = S^M(S^k_t(\omega^k), X_t^\pi(S^k_t(\omega^k)|\theta^{k-1}),W_{t+1}(\omega^k))$, and a random Gaussian vector $v_k$ to compute the gradient estimator as
 \beq \label{grad_zorth}
 G_{\eta_k}(\theta^{k-1},\omega^k) = \tfrac{\bar{F}(\theta^{k-1}+\eta_k v_k,\omega^k)-\bar{F}(\theta^{k-1},\omega^k)}{\eta_k}v_k
 \eeq							
							
	   \STATE Update policy parameters
							\begin{equation} \label{update_theta}
								\theta^k = \theta^{k-1} - \alpha_k G_{\eta_k}(\theta^{k-1},\omega^k).
							\end{equation}
}

{\bf End For}

		\STATE Generate a random index $R$ according to $P_R$ and output $\theta^R$.
    \end{algorithmic}
    \end{algorithm}
Note that the quantity defined in \eqnok{grad_zorth} using only noisy evaluations of the original function $F(\theta)$ is an unbiased estimator for the gradient of the smooth approximation function $F_\eta$ i.e.,
\beq \label{unbiased_grad}
\bbe_{v,\omega} [G_\eta(\theta,\omega)] = \bbe_{v} [\tfrac{F(\theta+\eta v)-F(\theta)}{\eta} v ] = \nabla F_\eta(\theta)
\eeq
due to \eqnok{smth_approx_grad} and the fact that $v$ and $\omega$ are independent. {\color{black}Therefore, one can use this quantity and deliberately apply SA algorithms to the function $F_\eta(\theta)$ and use Lemma~\ref{smth_approx} to establish rate of convergence of these algorithms for the original function $F(\theta)$. This is the basic idea of proposing Algorithm~\ref{CFA_Grad_F} for solving problem \eqnok{cum_reward}. It should be mentioned that the framework of this algorithm has been first proposed in \cite{NesSpo17} and then widely used in the literature of stochastic optimization using zeroth-order information (see e.g., \cite{GhaLan12}). However, our choice of stepsize policy and using adaptive smoothing parameter makes Algorithm~\ref{CFA_Grad_F} different from its existing variants.

{\color{black} The algorithm uses an idea first proposed by Ghadimi and Lan (2013) of randomly choosing from the generated trajectory $\{\theta^1, \ldots, \theta^k, \ldots, \theta^N\}$, instead of using the last iterate $\theta^N$. This step is essential to establish convergence results like \eqnok{stationry_Feta2} for SA-type algorithms when applied to nonconvex stochastic optimization problems. This kind of randomization scheme seems to be the only way} for this purpose since $\min_{k=1,2,\ldots}\|\nabla F(\theta^k)\|^2$ is not computable for this class of problems. Note that the PMF of the random index of the output solution depends on the choice of stepsize policy which will be discussed later in this subsection.

Since Algorithm~\ref{CFA_Grad_F} uses an adaptive smoothing parameter $\eta_k$, its convergence analysis is slightly different than the one presented in \cite{NesSpo17} when $F$ is nonsmooth and nonconvex. Hence, we now provide its main convergence property.
}
\begin{theorem}
Let $\{\theta_k\}$ be generated by Algorithm~\ref{CFA_Grad_F} and $\bar F (\theta)$ be Lipschitz continuous with constant $L_0$. If $F(\theta)$ is bounded above by $F^*$, we have
\beq \label{stationry_Feta}
\bbe[\|\nabla F_{\eta_R} (\theta^R)\|^2] \le \frac{F^* - F(\theta^0)+L_0 \sqrt{d}\left(\eta_1+\eta_N+\sum_{k=1}^{N-1} |\eta_k - \eta_{k+1}|+ L_0^2 (d+4)^2 \sum_{k=1}^N \frac{\alpha_k^2}{2\eta_k}\right)}{\sum_{k=1}^N \alpha_k},
\eeq
where {\color{black}the expectation is taken w.r.t the randomness arising from the nature of the problem $\omega$, and the ones imposed by the algorithm, namely, Gaussian random vector $v$ and random integer number $R$ whose probability distribution} is given by \eqnok{def_PR}.
\end{theorem}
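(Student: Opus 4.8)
The plan is to treat each iteration of Algorithm~\ref{CFA_Grad_F} as an exact stochastic-gradient step on the \emph{smoothed surrogate} $F_{\eta_k}$ and then run the standard ``descent-plus-randomization'' argument for nonconvex stochastic optimization, the one genuine complication being that the surrogate itself changes from step to step because $\eta_k$ varies. The two facts that make the surrogate viewpoint work are already in the excerpt: by \eqnok{unbiased_grad} the estimator $G_{\eta_k}(\theta^{k-1},\omega^k)$ is conditionally unbiased for $\nabla F_{\eta_k}(\theta^{k-1})$, and by Lemma~\ref{smth_approx}(b) the surrogate $F_{\eta_k}$ has an $L_{\eta_k}=\tfrac{\sqrt d}{\eta_k}L_0$-Lipschitz gradient. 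I would first apply the descent lemma for $L_{\eta_k}$-smooth functions along the update $\theta^k=\theta^{k-1}-\alpha_k G_{\eta_k}$, and then take the conditional expectation given the history $\mathcal F_{k-1}$ over the fresh randomness $(v_k,\omega^k)$. Unbiasedness turns the cross term into $-\alpha_k\|\nabla F_{\eta_k}(\theta^{k-1})\|^2$, and the quadratic term is controlled by the second-moment bound \eqnok{bnd_grad} applied to each Lipschitz sample function $\bar F(\cdot,\omega)$, giving $\mathbb E[\|G_{\eta_k}\|^2\mid\mathcal F_{k-1}]\le L_0^2(d+4)^2$. Multiplying by $L_{\eta_k}/2$ produces exactly the per-step error $L_0^3\sqrt d\,(d+4)^2\,\alpha_k^2/(2\eta_k)$ appearing inside the sum in \eqnok{stationry_Feta}.

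After rearranging, the per-step inequality reads (in full expectation) $\alpha_k\,\mathbb E\|\nabla F_{\eta_k}(\theta^{k-1})\|^2 \le \mathbb E[F_{\eta_k}(\theta^{k-1})]-\mathbb E[F_{\eta_k}(\theta^k)] + L_0^3\sqrt d\,(d+4)^2\alpha_k^2/(2\eta_k)$, and the next step is to sum over $k=1,\dots,N$. \textbf{This is where the main obstacle lies:} the ``potential'' differences $\mathbb E[F_{\eta_k}(\theta^{k-1})]-\mathbb E[F_{\eta_k}(\theta^k)]$ do \emph{not} telescope directly, because consecutive terms carry different subscripts $\eta_k$ and $\eta_{k+1}$ on the same iterate. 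I would regroup the sum as
\begin{equation*}
F_{\eta_1}(\theta^0)-\mathbb E[F_{\eta_N}(\theta^N)]+\sum_{k=1}^{N-1}\mathbb E\big[F_{\eta_{k+1}}(\theta^k)-F_{\eta_k}(\theta^k)\big],
\end{equation*}
so that the non-telescoping residual is isolated in the last sum of ``smoothing mismatches.''

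That residual is exactly what produces the $\sum_{k=1}^{N-1}|\eta_k-\eta_{k+1}|$ term, and making it tight (rather than the wasteful $\eta_k+\eta_{k+1}$ that a naive triangle inequality through $F$ would give) is the crux. I would bound each mismatch directly from the definition $F_\eta(\theta)=\mathbb E_v[F(\theta+\eta v)]$: pointwise $L_0$-Lipschitzness of $F$ gives $|F(\theta^k+\eta_{k+1}v)-F(\theta^k+\eta_k v)|\le L_0|\eta_{k+1}-\eta_k|\,\|v\|$, and taking expectations with $\mathbb E\|v\|\le (\mathbb E\|v\|^2)^{1/2}=\sqrt d$ yields $|F_{\eta_{k+1}}(\theta^k)-F_{\eta_k}(\theta^k)|\le L_0\sqrt d\,|\eta_{k+1}-\eta_k|$. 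The two boundary terms are handled by \eqnok{rand_smth_close}, which replaces $F_{\eta_1}(\theta^0)$ and $F_{\eta_N}(\theta^N)$ by $F(\theta^0)$ and $F(\theta^N)$ at the cost of $\eta_1 L_0\sqrt d$ and $\eta_N L_0\sqrt d$; the a-priori uniform bound on $F$ then collapses these into the constant displayed as $F^*-F(\theta^0)$ in \eqnok{stationry_Feta} (one should track the direction of the bound on $F$ carefully so that this difference is the controlled one).

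Finally, I would divide through by $\sum_{k=1}^N\alpha_k$ and read the left-hand side as an expectation over the random index: by the PMF \eqnok{def_PR}, $\big(\sum_k\alpha_k\big)^{-1}\sum_k\alpha_k\,\mathbb E\|\nabla F_{\eta_k}(\theta^{k-1})\|^2 = \mathbb E_R\,\mathbb E\|\nabla F_{\eta_R}(\theta^{R-1})\|^2$, which is the quantity on the left of \eqnok{stationry_Feta} up to the indexing convention relating the output $\theta^R$ to the point at which the $R$-th gradient is evaluated. Combining this with the bounds from the previous two paragraphs gives \eqnok{stationry_Feta}. Apart from the telescoping step, everything is routine; the two points to state carefully are the independence of $(v_k,\omega^k)$ from $\mathcal F_{k-1}$ used in the conditional expectation, and the fact that Lemma~\ref{smth_approx} is invoked for each sample function $\bar F(\cdot,\omega)$ (each $L_0$-Lipschitz) rather than for $F$ itself.
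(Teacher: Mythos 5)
Your proposal is correct and follows essentially the same route as the paper's own proof: the descent lemma for the $L_{\eta_k}$-smooth surrogate $F_{\eta_k}$ combined with the unbiasedness of $G_{\eta_k}$ and the second-moment bound \eqnok{bnd_grad}, the non-telescoping sum regrouped so that the smoothing mismatches $F_{\eta_k}(\theta^k)-F_{\eta_{k+1}}(\theta^k)$ are isolated and bounded by $L_0\sqrt{d}\,|\eta_k-\eta_{k+1}|$ via $\mathbb{E}\|v\|\le\sqrt{d}$, the boundary terms handled by \eqnok{rand_smth_close} together with the upper bound $F\le F^*$, and the final conversion to $\mathbb{E}[\|\nabla F_{\eta_R}(\theta^R)\|^2]$ through the PMF \eqnok{def_PR}. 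Your parenthetical remarks about tracking the direction of the bound on $F$ and about the indexing of the point at which the $R$-th gradient is evaluated flag minor inconsistencies that are also present in the paper's own write-up, so they do not indicate a gap in your argument.
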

\begin{proof}
First note that $F (\theta)$ is Lipschitz continuous with constant $L$ due to the same assumption on $\bar F (\theta)$. Hence, the gradient of $F_\eta (\theta)$ is Lipschitz continuous with constant $L_\eta$ due to Lemma~\ref{smth_approx}.b which together with \eqref{update_theta} imply that
\beqa
F_{\eta_k} (\theta^k) &\ge& F_{\eta_k} (\theta^{k-1}) + \langle \nabla F_{\eta_k} (\theta^k), \theta^k - \theta^{k-1} \rangle -\frac{L_{\eta_k}}{2}\|\theta^k - \theta^{k-1}\|^2 \nn\\
&=& F_{\eta_k} (\theta^{k-1}) + \alpha_k \langle \nabla F_{\eta_k} (\theta^k), G_{\eta_k}(\theta^{k-1},\omega^k) \rangle -\frac{L_{\eta_k} \alpha_k^2}{2}\|G_{\eta_k}(\theta^{k-1},\omega^k)\|^2.\nn
\eeqa
Taking expectations of both sides, noting \eqnok{bnd_grad}, \eqnok{grad_zorth}, and \eqnok{unbiased_grad}, we obtain
\[
\bbe[F_{\eta_k} (\theta^k)] \ge F_{\eta_k} (\theta^{k-1}) + \alpha_k \|\nabla F_{\eta_k} (\theta^k)\|^2 -\frac{L_0^3 \sqrt{d}(d+4)^2 \alpha_k^2}{2\eta_k}.
\]
Summing up both sides of the above inequality and re-arranging the terms, we have
\beq \label{proof_nocvx1}
\sum_{k=1}^N \alpha_k \bbe[\|\nabla F_{\eta_k} (\theta^k)\|^2 ] \le \Delta_N+L_0^3 \sqrt{d}(d+4)^2 \sum_{k=1}^N \frac{\alpha_k^2}{2\eta_k},
\eeq
where $\Delta_N = F_{\eta_N}(\theta^N)-F_{\eta_1}(\theta^0)+\sum_{k=1}^{N-1} [F_{\eta_k}(\theta^k)-F_{\eta_{k+1}}(\theta^k)]$. Noting \eqnok{rand_smth_close}, the fact that $F (\theta) \le F^*$ for any $\theta \in \bbr^d$, \eqnok{rand_smooth_func}, and Lipschitz continuity of $F$, we have
\beqa
F_{\eta_N}(\theta^N)-F_{\eta_1}(\theta^0) &\le& F^* - F(\theta^0)+ (\eta_1+\eta_N) L_0 \sqrt{d}, \nn \\
F_{\eta_k}(\theta^k)-F_{\eta_{k+1}}(\theta^k) &=&  \bbe_v[F(\theta^k+\eta_k v)-F(\theta^k+\eta_{k+1} v)] \nn \\
 &\le& L_0 |\eta_k - \eta_{k+1}| \bbe_v[\|v\|] \le L_0 \sqrt{d} |\eta_k - \eta_{k+1}|.\nn
\eeqa
Combining \eqnok{proof_nocvx1} with the above two relations,
\eqnok{stationry_Feta} follows by noting that in the view of \eqnok{def_PR}, we have
$
\bbe[\|\nabla F_{\eta_R} (\theta^R)\|^2] = \frac{\sum_{k=1}^N \alpha_k \bbe[\|\nabla F_{\eta_k} (\theta^k)\|^2 ]}{\sum_{k=1}^N \alpha_k}.
$
\end{proof}
\begin{corollary}
Let the smoothing parameters and stepsizes of Algorithm~\ref{CFA_Grad} be given by
\beq \label{eta_alpha}
\eta_k = \frac{L_0 (d+4)}{k^\beta}, \qquad \alpha_k = \frac{1}{\sqrt{k}} \qquad k =1,\ldots, N,
\eeq
for any $\beta \in (0, \tfrac12)$. Then we have
\beq \label{stationry_Feta2}
\bbe[\|\nabla F_{\eta_R} (\theta^R)\|^2] \le \frac{1}{2\sqrt{N}} \left[F^* - F(\theta^0)+L_0^2 \sqrt{d}(d+4)\left(2+\frac{(N+1)^\beta}{\beta}\right)\right],
\eeq
where the probability distribution of $R$ is given in \eqnok{def_PR}.
\end{corollary}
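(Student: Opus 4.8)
The statement is a corollary of the Theorem, so the plan is purely to substitute the prescribed sequences from \eqnok{eta_alpha} into the right-hand side of \eqnok{stationry_Feta} and estimate the resulting finite sums by elementary integral comparisons; no new analytic idea is needed. First I would note that, as already used in the Theorem's proof, the Lipschitz constant of $F$ may be taken equal to $L_0$, so \eqnok{stationry_Feta} applies verbatim. Its numerator then decomposes into four pieces---$\eta_1$, $\eta_N$, the total-variation sum $\sum_{k=1}^{N-1}|\eta_k-\eta_{k+1}|$, and the step/smoothing sum $L_0^2(d+4)^2\sum_{k=1}^N \tfrac{\alpha_k^2}{2\eta_k}$---while the denominator is $\sum_{k=1}^N \alpha_k$, and the task is to evaluate each.

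The key simplification is that $\eta_k = L_0(d+4)/k^\beta$ is strictly decreasing in $k$ (because $\beta>0$), so the total-variation sum telescopes: $\sum_{k=1}^{N-1}|\eta_k-\eta_{k+1}| = \eta_1-\eta_N$. Adding $\eta_1+\eta_N$ gives $2\eta_1 = 2L_0(d+4)$, which is precisely the constant $2$ inside the bracket of \eqnok{stationry_Feta2}. For the step/smoothing sum I would substitute $\alpha_k^2 = 1/k$ to get $\sum_{k=1}^N \tfrac{\alpha_k^2}{2\eta_k} = \tfrac{1}{2L_0(d+4)}\sum_{k=1}^N k^{\beta-1}$, and then bound the $p$-series by the integral $\sum_{k=1}^N k^{\beta-1}\le \int_0^N x^{\beta-1}\,dx = N^\beta/\beta \le (N+1)^\beta/\beta$, which is finite exactly because $\beta>0$. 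Multiplying by the prefactor $L_0^2(d+4)^2$ cancels one factor of $L_0(d+4)$ and produces the $(N+1)^\beta/\beta$ term.

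It remains to estimate the denominator, for which I would use the near-tight comparison $\sum_{k=1}^N k^{-1/2}\ge 2(\sqrt{N+1}-1)$, which is of order $2\sqrt N$; this simultaneously supplies the $1/\sqrt N$ decay rate and the leading factor $\tfrac12$ in \eqnok{stationry_Feta2}. Collecting the common factor $L_0^2\sqrt d\,(d+4)$ across the three $\eta$-dependent terms and dividing by this lower bound on $\sum_k\alpha_k$ then yields \eqnok{stationry_Feta2}. Finally I would record why the range $\beta\in(0,\tfrac12)$ is exactly the right one: $\beta>0$ keeps $\int_0^N x^{\beta-1}\,dx$ finite, while the dominant term of the bound behaves like $(N+1)^\beta/(\beta\sqrt N)\sim N^{\beta-1/2}$, which tends to zero as $N\to\infty$ precisely when $\beta<\tfrac12$.

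The only genuinely delicate part is the constant bookkeeping: the two integral comparisons (one for $\sum k^{\beta-1}$ in the numerator, one for $\sum k^{-1/2}$ in the denominator) must be combined so as to reproduce exactly the displayed coefficient $1/(2\sqrt N)$ and bracket $2+(N+1)^\beta/\beta$, rather than a bound off by a universal factor. In particular the factor $\tfrac12$ originates entirely from the sharper denominator estimate $\sum_{k=1}^N k^{-1/2}\approx 2\sqrt N$ (as opposed to the cruder $\sum_{k=1}^N k^{-1/2}\ge\sqrt N$), so I would be careful to invoke the two-sided integral bounds; strictly, $\sum_{k=1}^N k^{-1/2}$ falls just short of $2\sqrt N$ for finite $N$, so the stated constant is tight only up to lower-order terms. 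Everything else is mechanical substitution into \eqnok{stationry_Feta}.
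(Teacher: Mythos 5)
Your proposal is correct and follows essentially the same route as the paper: telescope the total-variation sum to $\eta_1-\eta_N$, bound $\sum_{k=1}^N k^{\beta-1}\le (N+1)^\beta/\beta$ by integral comparison, lower-bound $\sum_{k=1}^N \alpha_k$ by roughly $2\sqrt{N}$, and substitute into \eqref{stationry_Feta}. Your closing caveat is in fact sharper than the paper, which simply asserts $\sum_{k=1}^N k^{-1/2}\ge 2\sqrt{N}$ (strictly false for finite $N$, since the sum is at most $2\sqrt{N}-1$); the displayed constant survives only because the numerator estimate carries a compensating factor of $2$ of slack in the $(N+1)^\beta/(2\beta)$ term, a bookkeeping point you correctly identify as the delicate part.
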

\begin{proof}
Note that by \eqnok{eta_alpha}, we have
\begin{align*}
&\sum_{k=1}^{N-1} |\eta_k - \eta_{k+1}| = \sum_{k=1}^{N-1} (\eta_k - \eta_{k+1})= \eta_1-\eta_N, \\
& \sum_{k=1}^N \alpha_k \ge 2 \sqrt{N}, \qquad \sum_{k=1}^N \frac{\alpha_k^2}{\eta_k} = \frac{1}{L_0(d+4)}\sum_{k=1}^N k^{\beta-1} \le \frac{(N+1)^\beta}{\beta L_0(d+4)},
\end{align*}
which, with \eqnok{stationry_Feta}, clearly imply \eqnok{stationry_Feta2}.
\end{proof}
{\color{black}It should be mentioned that if a fixed smoothing parameter $\eta_k=\eta$ and a fixed stepsize $\alpha_k =\alpha$ are properly employed, then the upper bound would be on the order of $1/\sqrt{N}$. This rate has been obtained in \cite{NesSpo17} for the weighted average of $\bbe[\|\nabla F_{\eta}(\theta^k)\|^2]$ without introducing the random index $R$. However, to ensure convergence to the stationary point of the original problem, one should allow the smoothing parameter $\eta_k$ and stepsize $\alpha_k$ converge to $0$. Therefore, choosing these two quantities adaptively converging to $0$ would be more desirable in practice than setting them to fixed small numbers.
}

\subsection{The linear CFA model}
Our goal in this subsection is to specialize some results from the previous subsection and provide more properties for the linear CFA model. In particular, if the objective function in \eqnok{cum_reward} is a linear function of the decisions, $x_t$, the parametric CFA policy, $X^{\pi}_t(S_t|\theta)$, which determines the decision, $x_t$, can be written as the following linear program
\beq\label{eqn:LP_main}
    		X^{\pi}_t(S_t|\theta) =x^*_t, \qquad  \tilde{x^*}_t = [x^*_t,...,\tilde{x}^*_{t,\bar T}] = \argmin_{x_t, (\tilde{x}_{t,t'}), t'=t+1, \ldots, \bar T} \: \: c_t x_t + \sum_{t'=t+1}^{\bar T} \tilde{c}_{t,t'} \tilde{x}_{t,t'},
\eeq
where $\bar T = \min(t+H,T)$, $\tilde{A}_{t}\tilde{x}_{t} \leq \tilde{b}_{t}(\theta)$ for given $\tilde{A}_{t}$ and $\tilde{b}_{t}$. The state variable, $S_t$, includes the point estimates, $(\tilde W_{t,t'})_{t'=t+1,...,\bar T}$, that are used to approximate exogenous information. If this policy is written as a linear program where the state and approximated exogenous information is only in the right hand side constraints, $\tilde{b}_t(\theta)$, then Proposition~\ref{prop_grad} for computing a stochastic subgradient of $F(\theta)$ can be simplified as follows.
\begin{proposition}
Let $\bar{F}(\theta, \omega)$ be convex in $\theta$ for every $\omega \in \Omega$, $\theta$ be an interior point of $\Theta$, and the contribution cost function be a linear function of $x$, and the transition function $S_t = S^M(S_{t-1},x_{t-1},W_{t})$ be linear in $S_{t-1}$ and $x_{t-1}$. Moreover, assume that $F(\theta)$ is finite valued in the neighborhood of $\theta$, and the policy, $X^{\pi}_t(S_t|\theta)$ is given by \eqnok{eqn:LP_main} in which  $B_t$ is the basis matrix corresponding to the basic variables for the optimal solution. Then
\beqa
\nabla_{\theta} \bar{F}(\theta, \omega) &=& \sum^T_{t=1} \bigg( \nabla_{\theta} \tilde{b}_{t}(\theta) + \nabla_{S_t} \tilde{b}_{t}(\theta) \nabla_{\theta} S_t \bigg)^T \bigg( B^{-1}_t \bigg)^T c_t, \label{grad_barF}\\
\text{where} \ \  \nabla_{\theta}S_t &=& \nabla_{S_{t-1}} S^M(S_{t-1},x_{t-1},W_t) \nabla_{\theta}S_{t-1} + \nabla_{x_{t-1}} S^M(S_{t-1},x_{t-1},W_t) \nabla_{\theta} x_{t-1}.\nonumber
\eeqa
\end{proposition}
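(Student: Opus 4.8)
The plan is to specialize the chain-rule identity of Proposition~\ref{prop_grad} to the linear setting, where the only genuine work is to evaluate the sensitivity of the argmin map $\theta \mapsto X^\pi_t(S_t|\theta)$ through parametric linear programming. Because the contribution function is linear in the decision, $\partial C_t/\partial X^\pi_t = c_t$ and there is no remaining direct state dependence in the cost, so the generic expression in \eqnok{eq:stoch_gradient} collapses, for each $t\ge 1$, to
\[
c_t\Bigl(\frac{\partial X^\pi_t}{\partial S_t}\frac{\partial S_t}{\partial \theta} + \frac{\partial X^\pi_t}{\partial \theta}\Bigr) = c_t\,\frac{d X^\pi_t}{d\theta},
\]
i.e.\ the \emph{total} derivative of the implemented decision with respect to $\theta$, which accounts both for the explicit appearance of $\theta$ in the right-hand side $\tilde b_t(\theta)$ and for its implicit appearance through $S_t(\theta)$. (The $t=0$ term is of the identical form, with the simplification $\nabla_\theta S_0 = 0$ since $S_0$ is given; the indexing of the sum is then a matter of bookkeeping.) The whole proof reduces to computing $dX^\pi_t/d\theta$ and reorganizing the product into the transposed form of \eqnok{grad_barF}.

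The central computation is the parametric-LP sensitivity. Since $S_t$ and the point forecasts enter only the right-hand side $\tilde b_t(\theta)$, at an optimal vertex with basis matrix $B_t$ the basic components of the optimal solution are given in closed form by $B_t^{-1}\tilde b_t$. I would invoke the standard fact from parametric linear programming that, as long as $B_t$ remains optimal, the optimal basic solution is an affine function of the right-hand side with Jacobian $B_t^{-1}$, so that, applying the chain rule to $\tilde b_t(\theta) = \tilde b_t(\theta, S_t(\theta))$,
\[
\frac{dX^\pi_t}{d\theta} = B_t^{-1}\frac{d\tilde b_t}{d\theta} = B_t^{-1}\bigl(\nabla_\theta \tilde b_t(\theta) + \nabla_{S_t}\tilde b_t(\theta)\,\nabla_\theta S_t\bigr).
\]
Substituting this into the reduced expression, pre-multiplying by the base-model cost vector $c_t$ (understood as the cost coefficients on the basic variables), and transposing each scalar summand so the result is written as a column gradient, yields exactly \eqnok{grad_barF}.

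To close the recursion I would differentiate the linear transition $S_t = S^M(S_{t-1}, x_{t-1}, W_t)$. Since both $S_{t-1}$ and $x_{t-1}$ depend on $\theta$, the chain rule produces
\[
\nabla_\theta S_t = \nabla_{S_{t-1}} S^M\,\nabla_\theta S_{t-1} + \nabla_{x_{t-1}} S^M\,\nabla_\theta x_{t-1},
\]
which is the stated ``where'' relation; the term $\nabla_\theta x_{t-1}$ is supplied by the LP-sensitivity formula at stage $t-1$, and with the initial condition $\nabla_\theta S_0 = 0$ this determines $\nabla_\theta S_t$ for all $t$.

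The main obstacle is the differentiability of the argmin map itself, where all the real content sits. The optimal solution of a parametric LP is only piecewise linear and continuous in the right-hand side, with kinks where the optimal basis switches and possible nondifferentiability (or multivaluedness) at degenerate vertices; the closed form $B_t^{-1}\tilde b_t$ and the Jacobian $B_t^{-1}$ are valid only where $B_t$ stays optimal, i.e.\ under primal non-degeneracy of the current basic solution. I would therefore state explicitly that the formula holds at those $\theta$ where the optimal basis is unique and the basic solution is non-degenerate, so that $X^\pi_t(S_t|\theta)$ is differentiable there; this is consistent with the convexity and finite-valuedness hypotheses, which (via the exchange-of-derivative-and-expectation result of \cite{Stra65} used in Proposition~\ref{prop_grad}) guarantee $\nabla_\theta F = \E[\nabla_\theta \bar F]$ wherever $\bar F(\cdot,\omega)$ is differentiable. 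Away from such points one must pass to generalized subgradients, exactly as noted after Proposition~\ref{prop_grad}.
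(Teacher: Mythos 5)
Your proof is correct and follows exactly the route the paper intends: the paper's own proof is a one-line deferral to Proposition~\ref{prop_grad} and ``basic properties of the linear program,'' and your argument is precisely that specialization, with the parametric-LP sensitivity $\nabla X^\pi_t = B_t^{-1}\,d\tilde b_t/d\theta$ supplying the missing link. You are in fact more careful than the paper in flagging the non-degeneracy/unique-basis caveat needed for differentiability of the argmin map and in noting the $t=0$ bookkeeping.
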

\begin{proof}
Proof follows from the proof of Proposition~\ref{prop_grad} and the basic properties of the linear program. Hence, we skip the details.
\end{proof}
For the linear CFA model, we can also provide some properties of the objective function.
\begin{lemma}
Assume that problem \eqref{cum_reward} belongs to a class of linear programs such that
\beq\label{linear_cum_reward}
C_t\Big(S_{t}(\omega,\theta),X^\pi_{t}(S_{t},\theta)\Big) = c_t(\omega)X^\pi_{t}(S_{t},\theta),
\eeq
where $X^{\pi}_t(S_t,\theta)$ is set to \eqnok{eqn:LP_main}. Further assume that the transition function $S_t = S^M(S_{t-1},x_{t-1},W_t)$ is linear in $S_{t-1}$ and $x_{t-1}$, and $\tilde{b}_{t}$ is also linear in $\theta$ and $S_{t}$. Moreover,
let $\Theta_t(\omega)$ be the largest convex subset of $\Theta_{t-1}(\omega)$ such that the optimal basis corresponding to $\tilde{x}^*_t$ remains the same for any $\theta$ belonging to this subset with the definition of $\Theta_{-1}(\omega):=\bbr^d$. Then $F(\theta)$, defined in \eqref{cum_reward}, is linear in $\theta$ over $\bar \Theta:= \bigcap_{\omega \in \Omega} \Theta_T(\omega)$.
\end{lemma}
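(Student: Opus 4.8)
The plan is to reduce the claim to a pathwise statement and prove that by a forward induction on the time index $t$. Since $F(\theta) = \mathbb{E}_\omega[\bar F(\theta,\omega)]$ and the expectation of a family of affine functions is again affine, it suffices to show that for each fixed $\omega$ the realized cost $\bar F(\cdot,\omega) = \sum_{t=0}^T c_t(\omega)\,X^\pi_t(S_t,\theta)$ is affine in $\theta$ on $\Theta_T(\omega)$. Because $\bar\Theta = \bigcap_{\omega\in\Omega}\Theta_T(\omega) \subseteq \Theta_T(\omega)$ for every $\omega$, affinity of each $\bar F(\cdot,\omega)$ on $\Theta_T(\omega)$ immediately yields affinity (``linearity'') of $F$ on $\bar\Theta$.

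The structural fact that makes this work is that the feasibility sets are nested: by construction $\Theta_T(\omega)\subseteq\Theta_{T-1}(\omega)\subseteq\cdots\subseteq\Theta_0(\omega)\subseteq\mathbb{R}^d$, since each $\Theta_t(\omega)$ is a subset of $\Theta_{t-1}(\omega)$. Hence for every $\theta\in\Theta_T(\omega)$ the optimal basis $B_t$ of the stage-$t$ program \eqref{eqn:LP_main} is the \emph{same} fixed basis, simultaneously for all $t=0,\dots,T$. Within such a basis-invariant region the standard parametric-LP identity gives $\tilde x_t^*(\theta) = B_t^{-1}\tilde b_t(\theta)$ on the basic components (and zero on the nonbasic ones), so $\tilde x_t^*$, and in particular the implemented decision $x_t^*=X^\pi_t(S_t,\theta)$, is an affine function of the right-hand side $\tilde b_t(\theta)$.

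I would then induct on $t$ to establish that both $S_t$ and $x_t^*$ are affine in $\theta$ on $\Theta_T(\omega)$. The base case holds because $S_0$ is a fixed initial state, constant in $\theta$. For the inductive step, assume $S_t$ is affine in $\theta$; since $\tilde b_t$ is assumed linear in $\theta$ and in $S_t$, the composition $\tilde b_t(\theta)$ is affine in $\theta$, and applying the fixed $B_t^{-1}$ shows $x_t^*(\theta)$ is affine in $\theta$. The transition $S_{t+1}=S^M(S_t,x_t,W_{t+1})$ is linear in $(S_t,x_t)$, so as a linear image of affine functions of $\theta$ it is again affine, closing the induction. Summing the affine terms $c_t(\omega)x_t^*(\theta)$ over $t=0,\dots,T$ shows $\bar F(\cdot,\omega)$ is affine on $\Theta_T(\omega)$, and taking expectation over $\omega$ finishes the argument.

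The main obstacle—and the step deserving care—is the circular coupling across stages: the stage-$t$ right-hand side $\tilde b_t(\theta)$ depends on $S_t$, which depends on the earlier decisions $x_0,\dots,x_{t-1}$, each of which depends on $\theta$ through its own optimal basis. The nested definition of $\Theta_t(\omega)$ is exactly the device that breaks this circularity: restricting to $\Theta_t(\omega)\subseteq\Theta_{t-1}(\omega)\subseteq\cdots$ guarantees that the bases $B_0,\dots,B_{t-1}$ are already frozen, so that $S_t$ is a genuine affine function of $\theta$ \emph{before} one requires $B_t$ itself to remain constant. I would therefore make explicit that this recursion is well-founded—once $B_0,\dots,B_{t-1}$ are fixed, $\tilde b_t(\theta)$ is affine in $\theta$, and the set of $\theta$ for which a given basis stays optimal for an LP with affine right-hand side is a convex polyhedron, so ``the largest convex subset'' in the definition is meaningful—while noting that $\bar\Theta$ could a priori be empty or a singleton, in which case the conclusion is vacuous or trivial.
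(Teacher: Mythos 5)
Your proposal is correct and follows essentially the same route as the paper's own proof: a forward induction over $t$ exploiting the nestedness $\Theta_T(\omega)\subseteq\cdots\subseteq\Theta_0(\omega)$, the invariance of the optimal basis on each $\Theta_t(\omega)$ (so that $x_t^*$ is affine in the right-hand side $\tilde b_t(\theta)$), and the linearity of the transition and of $\tilde b_t$ in $(\theta,S_t)$, followed by summation over $t$ and expectation over $\omega$. Your added remarks on the well-foundedness of the recursion and the possible triviality of $\bar\Theta$ are sound refinements of the same argument rather than a different approach.
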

\begin{proof}
First note that by the definition of $\Theta_t(\omega)$, for a fixed $\omega \in \Omega$ we have $\Theta_T \subseteq \Theta_{T-1} \subseteq \cdots \Theta_0 \subseteq \bbr^d$. Noting that $S_0$ does not depend on $\theta$ and $\tilde{b}_0(\theta, S_0)$ is linear in $\theta$, we conclude that $x^*_0$ is linear in $\theta$ for any $\theta \in \Theta_0(\omega)$. This also implies that $S_1$ is linear in $\theta$. Hence, $\tilde{b}_1(\theta, S_1)$ is also linear in $\theta$ for any $\theta \in \Theta_0(\omega)$ implying that $x^*_0$ is linear in $\theta$ for any $\bigcap_{t=0,1}\Theta_t(\omega) = \Theta_1(\omega) $. The same argument holds for any $t \ge 2$. Therefore, taking the intersection of $\bigcap_{t=0,\ldots,T}\Theta_t(\omega) = \Theta_T(\omega) $ over all $\omega \in \Omega$, we conclude that $F(\theta)$ is linear for all $\theta$ belonging to this intersection set.
\end{proof}
A more general form of the above Lemma can be stated as follows.
\begin{lemma}
Consider problem \eqref{cum_reward} together with \eqnok{linear_cum_reward} and \eqnok{eqn:LP_main} such that only one of the right hand side constraints (other than inventory ones), say the $i$-th constraint, is parameterized with a $\theta$ for all periods. Further assume that the transition function $S_t = S^M(S_{t-1},x_{t-1},\omega_{t})$ is linear in $S_{t-1}$ and $x_{t-1}$, and $\tilde b^i_{t}$ is also linear in $\theta$ and $S_{t}$. Assuming that an interval $[a^l,a^u] \subset \bbr$ is given for the range of $\theta$, one can partition $[a^l,a^u]$ into subintervals such that $F(\theta)$ is a piecewise linear convex function of $\theta$ on each of these subintervals.
\end{lemma}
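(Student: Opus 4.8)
The plan is to reduce the statement to a per-scenario claim, read off the partition from parametric linear programming, and then prove convexity, which is the genuinely delicate step.

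First I would write $F(\theta)=\bbe_\omega[\bar F(\theta,\omega)]$ and observe that a (finite or integral) average of continuous, piecewise linear, convex functions is again continuous, piecewise linear, and convex, the breakpoints of $F$ being obtained as the common refinement of the per-scenario breakpoint sets (finite when $\Omega$ is finite, and handled by a dominated-convergence argument otherwise). Thus it suffices to fix $\omega\in\Omega$ and show that $\theta\mapsto\bar F(\theta,\omega)=\sum_{t=0}^{T}c_t(\omega)\,x_t^*(\theta)$ is continuous, piecewise linear, and convex on $[a^l,a^u]$, where $x_t^*(\theta)$ denotes the implemented decision returned by the lookahead program \eqnok{eqn:LP_main}.

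Second, for piecewise linearity I would reuse the inductive argument of the preceding lemma. On any maximal subinterval of $[a^l,a^u]$ on which the optimal basis of every stage-$t$ lookahead LP is unchanged, the state $S_t(\theta)$ is affine in $\theta$ (since $S_0$ is independent of $\theta$, the transition is linear in $S_{t-1},x_{t-1}$, and $\tilde b_t$ is linear in $\theta$ and $S_t$), hence each $x_t^*(\theta)=B_t^{-1}\tilde b_t(\theta,S_t(\theta))$ is affine and so is $\bar F(\cdot,\omega)$. Because the scalar $\theta$ perturbs a single right-hand-side entry linearly, parametric LP sensitivity guarantees that, for each $t$, the optimal basis changes at only finitely many values of $\theta$; taking the common refinement over $t=0,\dots,T$ partitions $[a^l,a^u]$ into finitely many subintervals on each of which $\bar F(\cdot,\omega)$ is affine. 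Continuity at the breakpoints follows from continuity of the LP optimal value and of the selected vertex in the right-hand side, so $\bar F(\cdot,\omega)$ is continuous and piecewise linear.

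Third, convexity is the main obstacle, and it is exactly here that the hypotheses ``single constraint,'' ``non-inventory,'' and scalar range are needed. A linear functional $\sum_t c_t(\omega)x_t^*(\theta)$ of piecewise-linear vertex trajectories is, in general, only piecewise linear and not convex, so convexity cannot follow from piecewise linearity alone. My route is to show that the one-sided slope of $\bar F(\cdot,\omega)$ is nondecreasing across each breakpoint. I would relate this slope to the optimal dual multiplier of the single parameterized constraint $i$: since $\theta$ enters only the $i$-th right-hand side and does so linearly, each stage value behaves like the optimal value of a minimization LP as a function of a scalar right-hand-side parameter, which is convex and piecewise linear with nondecreasing slope (the dual of constraint $i$ is monotone in its right-hand side). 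The non-inventory hypothesis keeps this clean: it guarantees that the decision-dependent part of the state does not re-enter $\tilde b^i_t$, so the $\theta$-dependence of the perturbed constraint is the direct linear term only and is not fed back through the state recursion in a way that could reverse the slope change; the single-constraint, scalar hypothesis keeps the perturbation one-dimensional, so this marginal is a single monotone number rather than a direction-dependent quantity. Assembling the nondecreasing-slope property across breakpoints yields convexity of $\bar F(\cdot,\omega)$, and averaging over $\omega$ gives the claim for $F$.

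I expect the hard part to be making this dual-monotonicity/slope argument rigorous in the presence of the forward state recursion, i.e., verifying precisely that, under the non-inventory assumption, the feedback of $\theta$ through $S_t$ into the inventory constraints cannot reverse the sign of the slope jump at a breakpoint. This is the step that distinguishes the present (convex) lemma from the merely piecewise-linear behavior one obtains when several constraints, or a vector-valued $\theta$, are perturbed.
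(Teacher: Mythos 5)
There is a genuine gap, and it sits exactly where you flagged your own uncertainty: the convexity step. Note first that the lemma does \emph{not} assert that $F$ is convex on all of $[a^l,a^u]$ --- it asserts only that $[a^l,a^u]$ can be partitioned into subintervals on \emph{each} of which $F$ is piecewise linear convex. Your step three tries to prove the stronger global statement by showing the one-sided slopes are nondecreasing across every breakpoint, via monotonicity of the dual multiplier of the single parameterized constraint. That argument fails because $\theta$ does not enter each stage-$t$ LP only through the explicitly parameterized $i$-th right-hand side: it also enters through the inventory/state constraints, since $\tilde b_t$ depends on $S_t$, which depends on the earlier decisions $x_0^*(\theta),\ldots,x_{t-1}^*(\theta)$. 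Those earlier decisions are piecewise linear but not affine in $\theta$ once a basis changes, and a convex LP value function composed with a non-affine piecewise-linear map of $\theta$ is not convex. The ``non-inventory'' hypothesis only identifies which constraint carries the explicit $\theta$; it does not block the feedback of $\theta$ through the state recursion into the inventory constraints of later LPs, contrary to what you assert. Indeed, global convexity of $F$ on $[a^l,a^u]$ would contradict the paper's repeated claim (and its numerical evidence) that the objective is nonconvex.

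The paper's proof avoids this obstacle by constructing the partition so that the problematic feedback is frozen. Starting from $\theta=a^u$, it identifies the maximal subinterval $[a^{l,1}_{T-1},a^u]$ on which the optimal bases of the LPs for $t=0,1,\ldots,T-1$ are \emph{simultaneously} unchanged; on that subinterval every state and decision up to stage $T-1$ is affine in $\theta$, so the entire right-hand side of the final LP (inventory constraint included) is affine in $\theta$, and the only non-affine contribution to $\bar F(\cdot,\omega)$ is the optimal value of that last LP, which is convex piecewise linear in an affinely parameterized right-hand side of a minimization LP. It then recurses on $[a^{l,1}_0,a^{l,1}_{T-1}]$ until $[a^l,a^u]$ is covered. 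Your first two steps (per-scenario reduction and the basis/induction argument for piecewise linearity) are consistent with this, but to repair the proposal you should drop the dual-monotonicity route entirely and instead make the partition itself do the work: convexity on each cell comes from the value function of a single LP, not from slope monotonicity across cells.
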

\begin{proof}
Assume that the linear programs in \eqnok{eqn:LP_main} are solved for all $t=0,1,\ldots,T$ with the choice of $\theta=a^u$. Moreover, assume that $[a^{l,1}_0, a^u]$ be the subinterval that the optimal basis for the linear program in \eqnok{eqn:LP_main} corresponding to $t=0$ does not change for any $\theta$ belonging to this subinterval. Hence, $X^{\pi}_0(S_0,\theta)$ is a linear function of $\theta$ on this subinterval. This also implies that the inventory constraints for the linear program of $t=1$ are also linear functions of $\theta$ on the aforementioned subinterval. Hence, there exists $a^{l,1}_1 \ge a^{l,1}_0$ such that the optimal basis for the linear program in \eqnok{eqn:LP_main} corresponding to $t=1$ does not change for any $\theta \in [a^{l,1}_1, a^u]$. Moving forward with this argument we obtain the non-decreasing sequence $\{a^{l,1}_t\}_{t=0,1,\ldots,T-1}$ such that the solution of the $t$-th linear program is a linear function of $\theta$ on the subinterval $[a^{l,1}_t, a^u]$. Consequently, the right hand side of the $T$-th linear program including the inventory constraint and the $i$-th constraint will be a linear function for all $\theta \in [a^{l,1}_{T-1}, a^u]$. Hence, the optimal solution of this linear program is a piecewise linear convex function for any $\theta \in [a^{l,1}_{T-1}, a^u]$ and so is $F(\theta)$. Repeating the above argument for the interval $[a^{l,1}_0, a^{l,1}_{T-1}]$, we obtain a new subinterval over which $F(\theta)$ is a piecewise linear convex function of $\theta$. This process can be continued until the whole interval $[a^l,a^u]$ is covered.
\end{proof}
Note that the above result can be extended when more than one constraint is linearly parameterized by $\theta$. In this case, instead of subintervals, we have subsets of the parameter space over which $F(\theta)$ is a piecewise linear convex function of $\theta$. The next result provides the optimal policy for the special case of having perfect information about the future.
\begin{lemma}\label{lem_perfect}
Assume that $H=T-1$ and we are given perfect information for all sources of uncertainty. Then the optimal policy is not to parameterize the model i.e., $\theta^* = \theta(0)$.
\end{lemma}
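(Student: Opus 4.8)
The plan is to show that under perfect information the unparameterized point $\theta(0)$ is a global minimizer of $F$ by arguing that $F(\theta(0))$ already equals the optimum of the base problem \eqnok{cum_reward} over the \emph{entire} policy class $\Pi$, so that no choice of $\theta$ can improve on it. First I would exploit perfect information to collapse the expectation: with $\sigma=0$ every source of uncertainty is known, so the law of $\omega$ is a point mass at the realized trajectory and the objective reduces to the single-path deterministic sum $F(\theta) = \sum_{t=0}^T C_t\big(S_t(\theta),X_t^\pi(S_t(\theta)|\theta)\big)$, while the lookahead point forecasts $\tilde W_{t,t'}$ coincide with the true realizations $W_{t'}$.

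Next I would combine the defining property of $\theta(0)$ with the full-horizon assumption $H=T-1$. Because $\theta(0)$ is the parameterization associated with $\sigma=0$, it leaves the model unmodified, $\tilde b_t(\theta(0))=\tilde b_t$, $\tilde c_t(\theta(0))=\tilde c_t$, $\tilde A_t(\theta(0))=\tilde A_t$, and since $H=T-1$ the lookahead at each decision epoch spans the remaining horizon. Hence the lookahead LP \eqnok{eqn:LP_main} solved at state $S_t$ with $\theta(0)$ and the perfect forecasts is \emph{exactly} the true deterministic optimization of the remaining cost $\sum_{t'=t}^{T} C_{t'}$ subject to the true dynamics and constraints. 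The key step is then to invoke the principle of optimality for deterministic control: with no uncertainty the plan computed at $t=0$ is time-consistent, its continuation re-solved at each later $t$ reproduces the tail of the original plan, and rolling it out attains the global minimum $F^{\mathrm{opt}}:=\min_{\pi\in\Pi}\mathbb{E}[\sum_{t}C_t\mid S_0]$ of the base problem. This yields $F(\theta(0))=F^{\mathrm{opt}}$.

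Finally, for any $\theta\in\Theta$ the map $S_t\mapsto X_t^\pi(S_t|\theta)$ is itself a policy in $\Pi$, so its base-model cost cannot undercut the global optimum, $F(\theta)\ge F^{\mathrm{opt}}=F(\theta(0))$, and therefore $\theta^*=\theta(0)$. I expect the main obstacle to be the time-consistency argument of the second paragraph, namely verifying rigorously that re-solving the full-horizon deterministic lookahead at each $t$ returns the continuation of the initial optimal plan, so that the rolled-out policy achieves $F^{\mathrm{opt}}$ rather than merely some feasible value. A secondary technical point is ensuring that every parameterized policy remains feasible in the base model so that the inequality $F(\theta)\ge F^{\mathrm{opt}}$ is legitimate; this is immediate when the parameterization only tightens the lookahead constraints (as in the buffer example $\theta^L\le R_{t,t'}\le\theta^U$ with $\theta(0)$ giving $\theta^L=0,\ \theta^U=R^{\max}$), and otherwise should be folded into the admissible range of $\theta$.
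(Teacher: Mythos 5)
Your proposal is correct and follows essentially the same route as the paper: with perfect information and $H=T-1$ the forecast is fixed, the unmodified LP at $t=0$ is exactly the true deterministic problem, and time consistency of the rolling re-solves means $\theta(0)$ attains the deterministic optimum. The paper's two-sentence proof stops there; your additional step that any parameterized policy lies in $\Pi$ and hence satisfies $F(\theta)\ge F^{\mathrm{opt}}$ makes the optimality of $\theta(0)$ explicit rather than implicit, which is a welcome (but not substantively different) completion of the argument.
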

\begin{proof}
Given perfect forecasts for the case $H=T-1$ means that the forecast is no longer rolling over the horizon and is fixed. Hence the optimal solution of the linear program solved at $t=0$ is also optimal for all linear programs solved over the horizon.
\end{proof}
\subsection{The Static CFA model}
In this subsection, we propose an alternative approach for solving the base model. In particular, we consider a static variant of problem \eqnok{cum_reward} given by
\beqa
	&\min_{\theta \in \Theta}& \: \left\{F^S(\theta):=\mathbb{E}\left[\bar F^S(\theta) = \sum_{t=0}^T C_t(X^\pi_t(\theta)) \right] \right\}, \label{cum_reward2}\\
&\text{where}& \qquad [X^{\pi}_0(\theta),X^{\pi}_1(\theta), \ldots, X^{\pi}_T(\theta)]^\top = \argmin_{x_0, x_1, \ldots, x_T} \: \: \sum_{t=0}^T c_t x_t, \nn\\
&\text{subject to}& \qquad  A_t x_t \leq b_t(\theta) \quad t=0,1,\ldots,T. \label{stat_CFA}
\eeqa
Indeed, to evaluate the objective function of the model for a given $\theta$, we only solve one linear program at time $t=0$. In this case to better capture forecast changes for all periods, we allow $b_t$ to be any convex function of $\theta$. We also allow each period to have its own parameterization independent of other periods. The next result provides conditions where problem \eqnok{cum_reward2} is a convex programming problem.
\begin{lemma}
Let $F^S(\theta)$ and $X^{\pi}$ be defined in \eqnok{cum_reward2} and \eqnok{stat_CFA}, respectively. Also assume that inventory constraints are not parameterized by $\theta$ and $b_t(\theta)$ is convex in $\theta$ for any $t\in \{0,1,\ldots,T\}$. Then $F^S(\theta)$ is convex in $\theta$.
\end{lemma}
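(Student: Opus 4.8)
The plan is to reduce the statement to a deterministic (pathwise) convexity claim and then recognize the resulting object as the optimal value of a linear program parameterized through its right-hand side. Since $F^S(\theta) = \mathbb{E}[\bar F^S(\theta)]$ and expectation preserves convexity, it suffices to show that $\bar F^S(\cdot) = \sum_{t=0}^T C_t(X^\pi_t(\cdot))$ is convex for every realization. Because the contribution cost is linear in the decisions and $X^\pi(\theta)$ is the minimizer of $\sum_t c_t x_t$, the pathwise objective coincides with the optimal value of the single coupled linear program in \eqref{stat_CFA}, namely $\bar F^S(\theta) = \min\{\sum_t c_t x_t : A_t x_t \le b_t(\theta),\ t=0,\ldots,T\}$, where by hypothesis $\theta$ enters only through the right-hand sides $b_t(\theta)$, and the unparameterized inventory constraints that couple the periods are held fixed. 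The whole claim therefore rests on how this value function behaves as a function of $\theta$.

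First I would prove convexity through the epigraph argument. Take $\theta_1,\theta_2$ with optimal primal solutions $x^{(1)},x^{(2)}$, fix $\lambda\in[0,1]$, and set $\theta_\lambda = \lambda\theta_1+(1-\lambda)\theta_2$ and $x_\lambda=\lambda x^{(1)}+(1-\lambda)x^{(2)}$. The objective bound is immediate from linearity, $\sum_t c_t (x_\lambda)_t = \lambda \bar F^S(\theta_1)+(1-\lambda)\bar F^S(\theta_2)$, so once $x_\lambda$ is shown feasible at $\theta_\lambda$ we obtain $\bar F^S(\theta_\lambda)\le \lambda \bar F^S(\theta_1)+(1-\lambda)\bar F^S(\theta_2)$, which is exactly convexity. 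Keeping the inventory (coupling) constraints independent of $\theta$ guarantees $x_\lambda$ continues to satisfy them, so the only thing left to check is the parameterized block $A_t (x_\lambda)_t \le b_t(\theta_\lambda)$. An equivalent, and perhaps cleaner, bookkeeping device is LP duality: the value function can be written as $\max_{y\in Y}\langle -y,\, b(\theta)\rangle$ over a fixed dual polyhedron $Y$ independent of $\theta$, exhibiting it as a supremum of affine functionals of the vector $b(\theta)$.

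The main obstacle is precisely the feasibility verification of the parameterized block. Averaging the two solutions yields only $A_t (x_\lambda)_t \le \lambda b_t(\theta_1)+(1-\lambda)b_t(\theta_2)$, so to close the argument I must relate $\lambda b_t(\theta_1)+(1-\lambda)b_t(\theta_2)$ to $b_t(\theta_\lambda)$ in the sense dictated by the orientation of the constraint. This is exactly the step where the convexity of $b_t(\cdot)$ must be combined with the inequality sense of $A_t x_t \le b_t(\theta)$, and it is the delicate point of the proof: one has to make sure the curvature of $b_t$ and the orientation of the constraints line up so that the convex combination of two feasible points remains feasible at the averaged parameter. Once this alignment is confirmed coordinatewise for each $t$, feasibility of $x_\lambda$ follows, the cost bound closes the epigraph argument, and convexity transfers to $F^S$ by taking expectations. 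I would therefore concentrate the effort on making this feasibility/monotonicity interplay watertight, since the reduction to the pathwise LP value and the cost bound itself are routine.
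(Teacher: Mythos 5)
Your reduction to the pathwise LP value function and the final expectation step track the paper's own argument, which asserts that the feasible set of \eqnok{stat_CFA} is jointly convex in $(x,\theta)$ and then invokes partial minimization of a convex objective over a convex set; your explicit convex-combination argument is just that assertion unpacked. The problem is that you stop at precisely the step that carries all the content, and that step does not close under the stated hypothesis. With the constraint oriented as $A_t x_t \le b_t(\theta)$, averaging two feasible pairs gives $A_t (x_\lambda)_t \le \lambda b_t(\theta_1)+(1-\lambda)b_t(\theta_2)$, while convexity of $b_t$ gives $b_t(\theta_\lambda) \le \lambda b_t(\theta_1)+(1-\lambda)b_t(\theta_2)$ --- the same side of the inequality, so you cannot deduce $A_t (x_\lambda)_t \le b_t(\theta_\lambda)$. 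The curvature and the orientation do not ``line up'': they line up exactly when $b_t$ is concave (or affine), in which case $b_t(\theta_\lambda)\ge \lambda b_t(\theta_1)+(1-\lambda)b_t(\theta_2)$ and feasibility of $x_\lambda$ follows. Your dual bookkeeping exposes the same obstruction: $v(b)=\max\{b^\top y : A^\top y = c,\ y\le 0\}$ is convex and componentwise nonincreasing in $b$, and the composition of a nonincreasing convex function with a convex inner map is not convex in general --- the standard composition rule requires the inner map to be concave in that case.

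So the gap is genuine, and it is worth noting that it is the same gap the paper's one-line proof glosses over when it declares the feasible set ``clearly convex in $(x,\theta)$'': the set $\{(x,\theta): A_t x_t \le b_t(\theta)\}$ is convex when $b_t$ is concave, not when it is convex (scalar example: $\{(x,\theta): x\le \theta^2\}$ contains $(1,1)$ and $(1,-1)$ but not $(1,0)$). As written, your epigraph argument will fail on any genuinely convex, non-affine $b_t$. The statement is salvaged either by restricting to affine $b_t(\theta)$ --- which is both convex and concave, and covers every parameterization actually used in the paper --- or by changing the hypothesis to concavity of $b_t$. Either way, the feasibility verification you defer is not a routine detail to be tightened later; it is the entire proof, and under the hypothesis as stated it cannot be completed.
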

\begin{proof}
Note that if inventory constraints are not parameterized by $\theta$ and $b_t(\theta)$ is convex in $\theta$ for any $t\in \{0,1,\ldots,T\}$, the feasible set of problem \eqnok{stat_CFA} is clearly convex in $(x,\theta)$. Hence, since the objective function is only linear (convex) in $x$, the optimal value of \eqnok{stat_CFA}, which is indeed $F^S(\bar \theta, \omega)$, is also convex in $\theta$. Therefore, $F^S(\theta)=\bbe[F^S(\bar \theta, \omega)]$ is convex.
\end{proof}
Note that in the above lemma, we only need the parameterization of constraints (except the inventory ones) to be convex. In this case, subgradients of $F^S(\theta)$ are available everywhere and one can use the standard stochastic approximation algorithms for convex programming such as Algorithm~\ref{CFA_Grad} to solve problem \eqnok{cum_reward2} with convergence guarantees. We should also point out that to easily compute subgradients of $\bar F^S(\theta)$, we also assume that $b_t(\theta)$ is differentiable in $\theta$. Moreover, under the aforementioned static setting, we can significantly simplify the subgradient computations since the right-hand-side of the constraints do not depend on the state variables. In particular, \eqnok{grad_barF} is reduced to
\[
\nabla_{\theta} \bar{F}^S(\theta, \omega) = \sum^T_{t=0} \nabla_{\theta} b_{t}(\theta)^\top \bigg( B^{-1}_t \bigg)^\top c_t. \\
\]
\section{An Energy Storage Application}\label{sec_energy}
{\color{black}In this section, we use the setting of an energy storage application to show how we can use a parametric CFA to produce robust policies using rolling forecasts of varying quality.
\subsection{Problem description}\label{energy_int}
A smart grid manager must satisfy a recurring power demand with a stochastic supply of renewable energy, unlimited supply of energy from the main power grid at a stochastic price, and access to local rechargeable storage devices. This system is illustrated in Figure \ref{fig:system}.

\begin{figure}[h]
    \centering
    \includegraphics[width=0.6\textwidth]{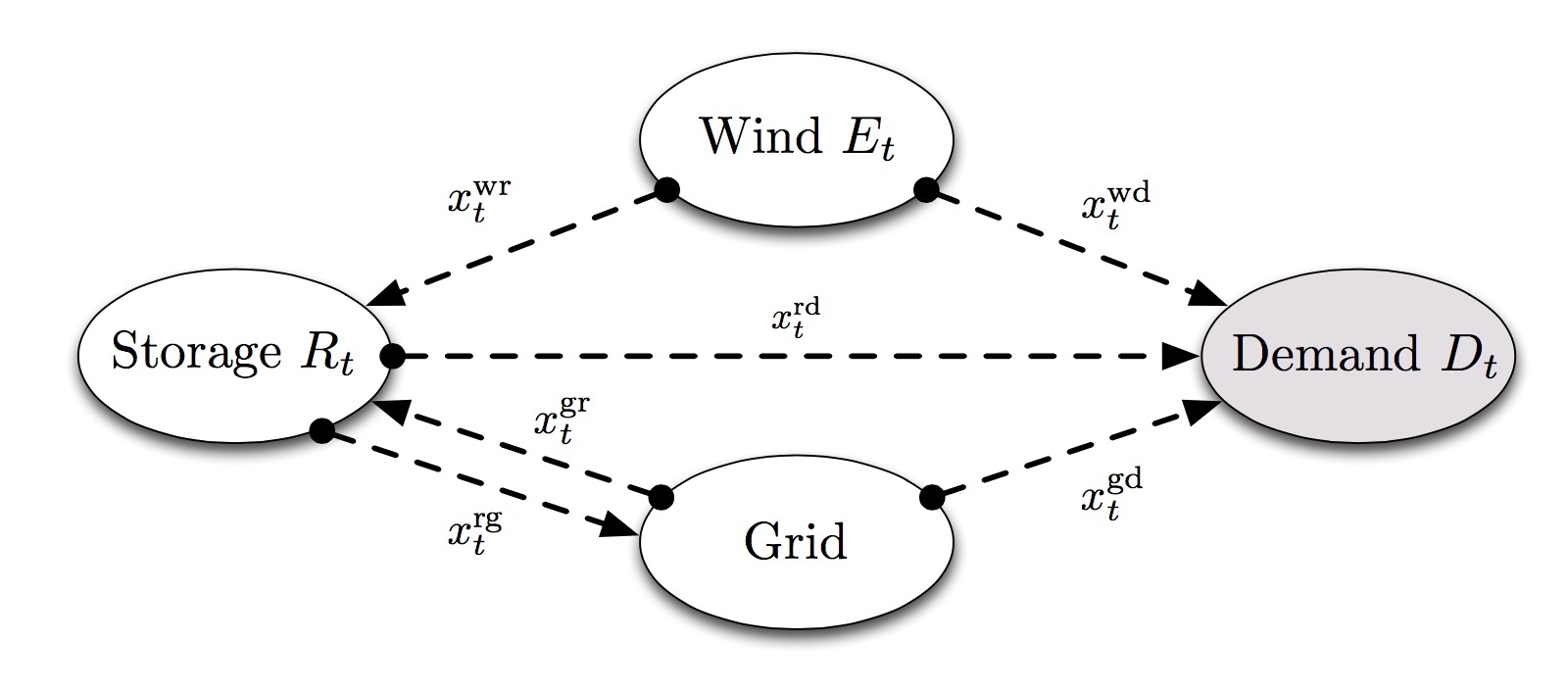}
    \caption{Energy system schematics}
    \label{fig:system}
\end{figure}


At the beginning of every period $t$ the manager must combine energy from different sources to satisfy the demand, $D_t$:
energy currently in storage (represented by a decision $x_t^{rd}$); newly available wind energy (represented by a decision $x_t^{wd}$); energy from the grid (represented by a decision $x_t^{gd}$).
Additionally, the manager must decide how much renewable energy to store, $x_t^{wr}$, how much energy, $x_t^{rg}$, to sell to the grid at price $P_t$, and how much energy to buy from the grid and store, $x_t^{gr}$. Hence, the manager's decision variable at $t$ is defined as the vector $x_t = (x^{wd}_t, x^{gd}_t, x^{rd}, x^{wr}_t, x^{gr}_t, x^{rg}_t)^T \geq 0$, which should satisfy the following constraints:
\begin{eqnarray}
	x^{wd}_t  + \beta^dx^{rd}_t + x^{gd}_t 	& \leq & D_t, \qquad x^{wr}_t + x^{wd}_t \leq E_t, \qquad
x^{rd}_t + x^{rg}_t \leq R_t,\label{eq:1} \\
x^{wr}_t + x^{gr}_t - x^{rd}_t - x^{rg}_t &\leq& R^{\max} - R_t, \qquad x^{wr}_t + x^{gr}_t \leq  \gamma^c, \qquad
x^{rd}_t + x^{rg}_t \leq \gamma^d, \label{eq:7}
\end{eqnarray}
where $\gamma^c$ and $\gamma^d$ are the maximum amount of energy that can be charged or discharged from the storage device. Typically, $\gamma^c$ and $\gamma^d$ are the same.

The state variable at time $t$, $S_t$, includes the level of energy in storage, $R_t\in [0, R^{\max}]$ ($R^{\max}>0$ represents the storage capacity), the amount of energy available from wind and its forecast, $\{f^E_{t,t'}\}_{t' \ge t} (E_t=f^E_{t,t})$, the spot price of electricity from the grid and its forecast, $\{f^{P^g}_{t,t'}\}_{t' \ge t} (P^g_t=f^{P^g}_{t,t})$, the market price of electricity $P^d_t$, the demand $D_t$ and its forecast $\{f^D_{t,t'}\}_{t' \ge t} (D_t=f^D_{t,t})$, and the energy available from the grid $G_t$ at time $t$. Hence the state of the system can be represented by the vector $S_t = (R_t, G_t, f^E_{t,t'}, P^m_t, f^{P^g}_{t,t'}, f^D_{t,t'}) \ \ \forall t' \ge t$. In the Appendix, we describe how these forecasts can be generated.

The transition function, $S^M(\cdot)$ also explicitly describes the relationship between the state of the model at time $t$ and $t+1$ such that $S_{t+1} = S^M(S_t, x_t, W_{t+1})$, where $W_{t+1} = (E_{t+1}, P_{t+1}, D_{t+1})$ is the exogenous information revealed at $t+1$. In our numerical experiments, we assumed that $W_{t+1}$ is independent of $S_t$, but the CFA algorithm can work with any sample path provided by an exogenous source, which means it can handle an exogenous information process where $W_{t+1}$ may depend on $S_t$ and/or $x_t$. Indeed, the CFA method belongs to the class of {\it data driven} algorithms, where we do not need a model of the exogenous process. The relationship of storage levels between periods is defined as:
	\begin{equation}\label{eqn:trans}
		R_{t+1} = R_{t} - x^{rd}_t + \beta^c x^{wr}_t + \beta^c x^{gr}_t - x^{rg}_t,
	\end{equation}
where $\beta^c \in (0, 1)$ and $\beta^d \in (0, 1)$, are the charge and discharge efficiencies. Denoting the penalty of not satisfying the demand by $C^P$, for given state $S_t$ and decision $x_t$, the profit realized at $t$ is given by
\begin{equation}\label{contr_func}
	C_t(S_t, x_t) = C^P( D_t - x_t^{wd} - \beta^d x_t^{rd} - x_t^{gd}) - P^m_t (x_t^{wd} + \beta^d x_t^{rd} + x_t^{gd}) - P^g_t (\beta^d x^{rg}_t - x^{gr}_t - x^{gd}_t).
\end{equation}

\subsection{Policy Parameterizations}\label{energy_param}
For our deterministic lookahead, {\color{black} by noting \eqnok{contr_func}, we solve subproblem \eqref{energy_LA} subject to constraints \eqref{eq:1} - \eqref{eqn:trans} for $t' \in [t+1,t+H]$.
We call this deterministic lookahead policy the benchmark policy, and use it to estimate the degree to which the parameterized policies are able to improve the results in the presence of uncertainty. 
There are different ways of parameterizing the policy in this lookahead model. A few examples are as follows.}

\begin{itemize}
\item {\bf Constant forecast parameterization} - This parameterization uses a single scalar to modify the forecast amount of renewable energy for the entire horizon. Hence, the second constraint in \eqref{eq:1} is changed to
        		\begin{equation}\label{eq:}
        			x^{wr}_{t,t'} + x^{wd}_{t,t'} \leq \theta \cdot f^E_{t,t'}.
        		\end{equation}

	\item {\bf Lookup table forecast parameterization} - Overestimating or underestimating forecasts of renewable energy influences how aggressively a policy will store energy. We modify the forecast of renewable energy for each period of the lookahead model with a unique parameter $\theta_{\tau}$. This parameterization is a lookup table representation because there is a different $\theta$ for each lookahead period, $\tau = 0, 1, 2, ...$. This changes \eqref {eq:} to
        \begin{equation}\label{eqn:UFP}
        		x^{wr}_{t,t'} + x^{wd}_{t,t'} \leq \theta_{t' - t} \cdot f^E_{t,t'}.
        \end{equation}
         where $t' \in [t+1,t+H]$ and $\tau = t' -t$. If $\theta_{\tau} < 1$ the policy will be more conservative and decrease the risk of running out of energy. Conversely, if $\theta_{\tau} > 1$ the policy will be more aggressive and less adamant about maintaining large energy reserves.
	
\end{itemize}

\subsection{Numerical Experiments}
In this subsection, we test the aforementioned parameterizations of the deterministic lookahead policy defined on variations of the energy storage problem by providing the same forecasts of exogenous information for the benchmark and parameterized policies.
We say the parameterization $\theta$ outperforms the nonparametric benchmark policy if it has positive \emph{policy improvement} $\Delta F^{\pi}(\theta)$, given by
	\begin{equation}\label{improve}
		\Delta F^{\pi}(\theta) = \frac{F^{\pi}(\theta) - F^{\text{D-LA}}}{|F^{\text{D-LA}}|},
	\end{equation}
where $F^{\pi}(\theta)$ is the average profit (negative cost defined in \eqnok{contr_func}) generated by $\theta$ and $F^{\text{D-LA}}$ is the average profit generated by the unparameterized deterministic lookahead policy described by equation \eqref{energy_LA}. In all of our experiments, we compute these averaged profits over a testing data set including $1000$ random samples.

In our first set of experiments, we examine the performance of the lookup table parameterization policy with $H=23$ under perfect forecasts ($\sigma_{E}=0$) and noisy one ($\sigma^2_{E}=40$). In particular, we first set all values of $\theta$ to $1$ and then do a one-dimensional search over each coordinate of $\theta$. As it can be seen from Figure~\ref{fig:lookup_param_perfect}, under perfect forecasts, the optimal value for each coordinate of $\theta$ is $1$ while the others are set to $1$ as suggested by Lemma~\ref{lem_perfect}. However, when noisy forecasts are used, the optimal values for $\theta$ may be different than $1$.
\begin{figure}
        \centering
         \includegraphics[width=0.4\linewidth]{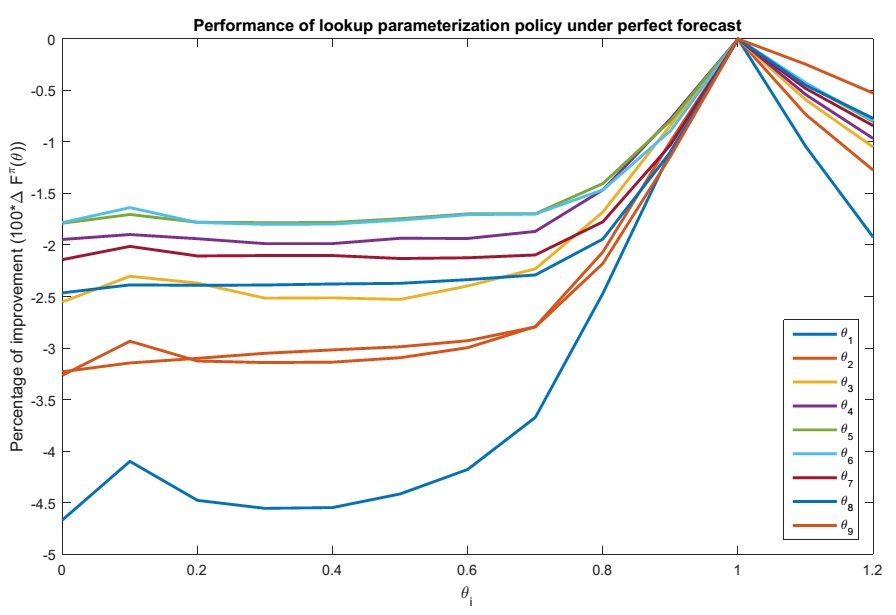}
         \label{fig:lookup_perfect}
          \includegraphics[width=0.4\linewidth]{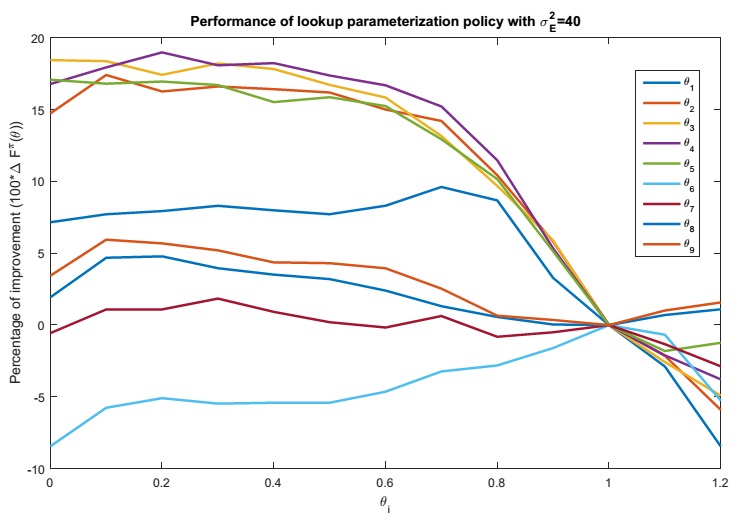}
          \label{fig:lookup_perfect2}
\caption{Averaged performance of lookup parameterization policy under perfect forecasts (right) and noisy one ($\sigma^2_{E}=40$). Each curves represents performance of the lookup policy over changing one $\theta_i$ while $\theta_j=1 \ \ \forall j \neq i$.}
        \label{fig:lookup_param_perfect}
\end{figure}

In the second set of experiments, we evaluate the performance of policies produced by our proposed stochastic approximation algorithms. In Algorithm~\ref{CFA_Grad}, since the stochastic gradient given in \eqnok{grad_barF} is not computable, we use numerical derivatives. Specifically, for each coordinate of the stochastic gradient, we use the finite-difference formula to estimate the corresponding partial derivatives by estimating the objective function at a given $\theta$ and its perturbations for each coordinate. We call this variant of Algorithm~\ref{CFA_Grad}, the Stochastic Numerical Gradient method for the CFA model (SNG-CFA). Since our optimization problem is nonconvex, we implement our algorithms for several different starting points using $N=800$ iterations. We then evaluate the performance of the policy that is produced by averaging over a thousand simulations. We compare the performance of the optimized policies against the base policy using $\theta=1$ in Figures~\ref{fig:lookup_SPSA}. {\color{black}We found that the RMSProp stepsize rule consistently outperformed AdaGrad, so RMSProp is used throughout.
For the SGF-CFA method, we use a mini-batch of sample paths to compute stochastic gradients according to \eqnok{grad_zorth} and then use their average as an estimation for the gradient. To have a fair comparison, we use a mini-batch of size $12$ at each iteration of this algorithm so that its computational cost is similar to that of the SNG-CFA method. While the latter does not have theoretical finite-time convergence guarantees, we found that both algorithms have comparable practical performance.
\begin{figure}
        \centering
          \centering
          \includegraphics[width=0.4\linewidth]{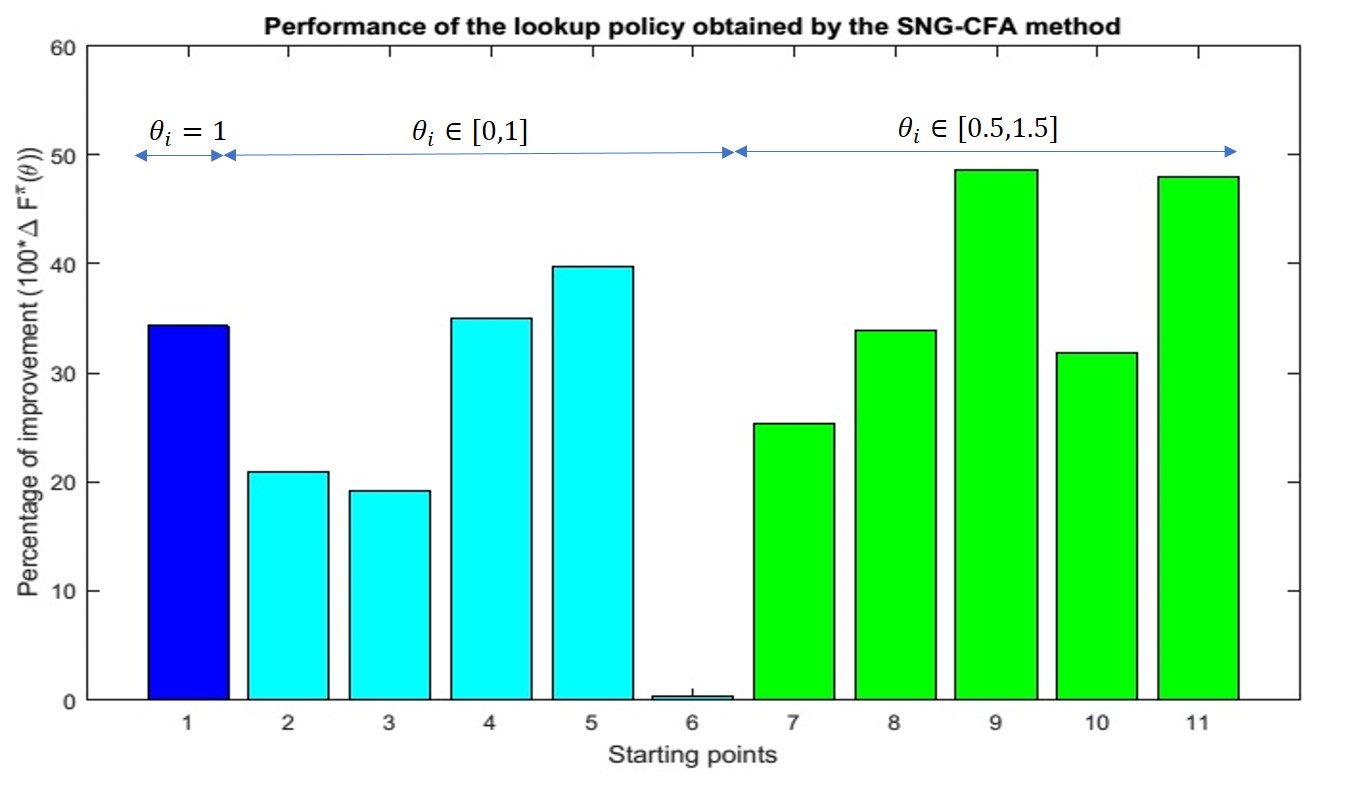}
          \includegraphics[width=0.4\linewidth]{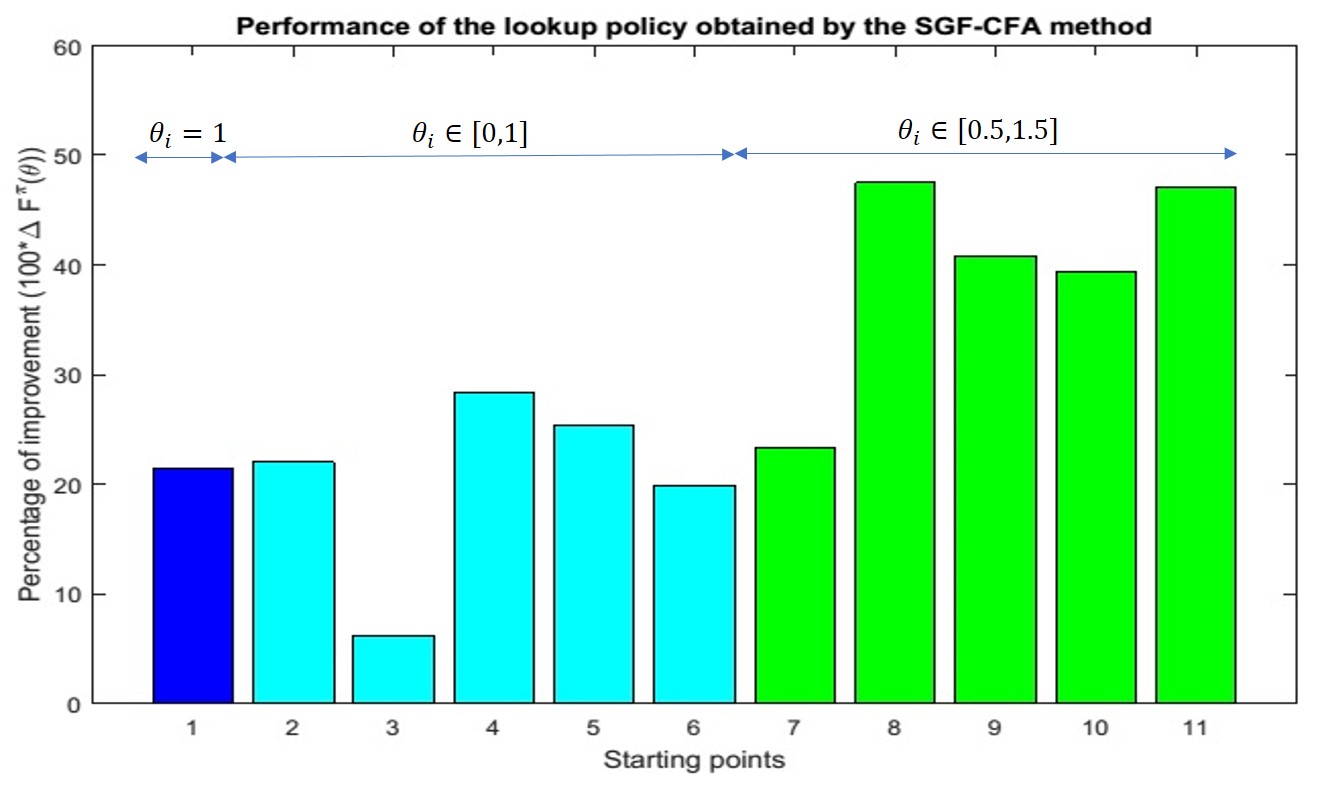}
        \caption{Left: Averaged performance of the output policy of the SNF-CFA method (left) and the SGF-CFA method with batch size of $12$ (right), using RMSProp stepsize for $\sigma^2_{E}=40$ over $1000$ simulations with different starting points.}
        \label{fig:lookup_SPSA}
\end{figure}

In the next set of experiments, we explored the structure of the response surface in our optimization problem
by doing a two-dimensional grid search for different pairs of coordinates of $\theta$ in the lookup table representation form, while keeping the other coordinates unchanged. We show the improvement in the objective function over the benchmark policy ($\theta = 1$) in Figure \ref{fig:lookup_Heatmap2} (more are shown in Appendix II). All graphs contain ridges on which changing the coordinates does not improve the policy. While the shape of the ridges can be quite different from one pair of coordinates to another one, most of them share some kind of unimodularity.
\begin{figure}
        \centering
         \includegraphics[width=0.4\textwidth]{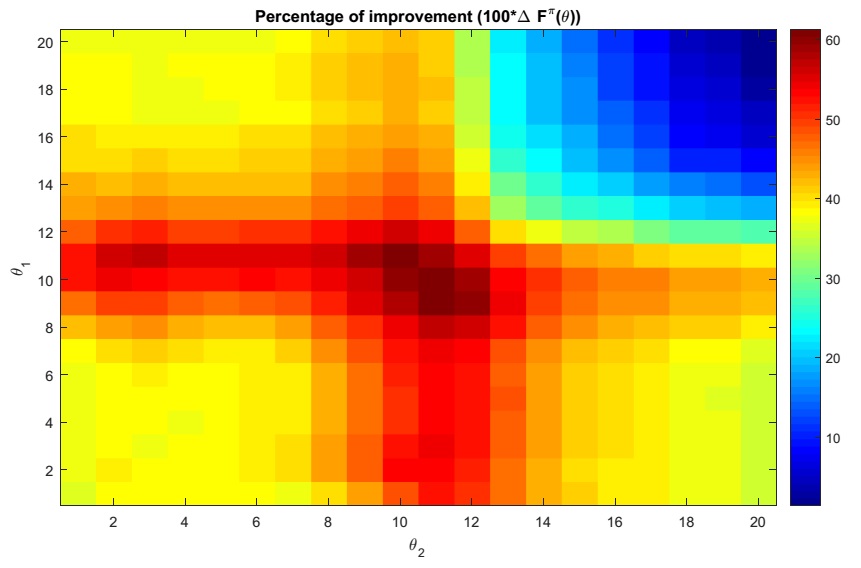}
          \includegraphics[width=0.4\textwidth]{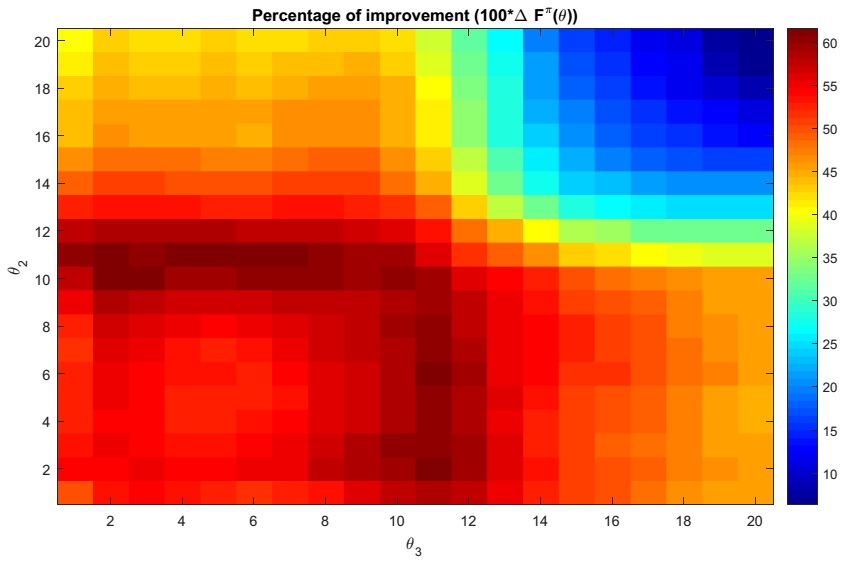}
        \caption{Averaged performance of lookup policies under noisy forecast. Each graph represents improvements over a two-dimensional grid surface of pairs of coordinates of $\theta$.}
        \label{fig:lookup_Heatmap2}
\end{figure}
\section{Conclusion}
This work builds upon a long history of using deterministic optimization models to solve sequential stochastic problems. Unlike other deterministic methods, our class of methods, CFAs, parametrically modify deterministic approximations to account for problem uncertainty. Our particular use of modified linear programs within the framework of the CFA represents a fundamentally new approach for solving multistage stochastic programming problems. Our method allows us to exploit the structural properties of the problem while capturing the complex dynamics of the full base model, rather than accepting the approximations required in a stochastic lookahead model.

The parametric CFA approach indeed represents an alternative to stochastic lookahead models that represent the foundation of stochastic programming. It requires some intuition into how uncertainty might affect the optimal solution. We would argue that this requirement parallels the design of any parametric statistical model, and hence enjoys a long history.  We believe there are many problems where practitioners have a good sense of how uncertainty affects the solution. 
This approach opens up entirely new lines of research.

{\bf{Acknowledgments}}

This research has been supported in part by the National Science Foundation, grant CMMI-1537427.


\newpage

{\bf{Appendix I}}

\subsection*{$1$. Renewable energy and demand model}
Our model is designed in part to create complex nonstationary behaviors to test the ability of our policy to exploit forecasts while managing uncertainty. We use a series of recursive equations to create a realistic model of the stochastic process describing the generation of renewable energy. In particular, letting $f^E_{t,t'}$ be the forecast of the renewable energy at time $t$ for time $t'$ and assuming that $\{f^E_{0,t'}\}_{t'=0,\ldots,\min(H,T)}$ is given, we define
\beq \label{E_forecast}
f^E_{t+1,t'} = f^E_{t,t'}+\epsilon_{t+1,t'} \qquad t=0,\ldots,T-1, \qquad t'=t+1,\ldots,\min(t+H,T)
\eeq
where $\epsilon_{t+1,t'}$ represents the level of noise whose distribution depends on $f^E_{t,t'}$. To create such noise, we first construct a symmetric matrix $\Sigma \in \bbr^{H \times H}$ such that $\Sigma(i,j) = \sigma_{E}^2 e^{-\alpha|i-j|} \ \ \forall i,j$, where $\sigma_{E}, \alpha>0$ are constant numbers. Indeed, we can manipulate the quality of the forecast by changing $\sigma_{E}$. By construction, $\Sigma$ acts as a covariance matrix representing less correlation between the $i$-th and $j$-th elements when they are far from each other. We then define a normal noise vector as
\[
\bar \epsilon_t = L \cdot Z_t,
\]
where $L$ is the lower triangular Cholesky decomposition of $\Sigma$ and $Z_t \sim {\cal N}(0,I_{H \times H})$. Hence, each element of $\bar \epsilon_t$ has a normal distribution with zero mean and variance $\sigma_{E}^2$ due to the fact that $\Sigma=L \cdot L^\top$ and $\Sigma(i,i)=1$. However, these elements are correlated by construction. To avoid nonnegativity of the forecast, we set
\[
[\epsilon_{t+1,t+1},\ldots,\epsilon_{t+1,\min(t+H,T)}]^\top = F_{EM}^{-1} (\phi(trc^t_H(\bar \epsilon_t))),
\]
where the operator $trc^t_H(y)$ truncates the first $\min(H,T-t)$ elements of vector $y$, $\phi(\cdot)$ is standard normal density function, and $F_{EM}$ is an empirical cumulative distribution function obtained from historical data. The choice of $F_{EM}$ depends on ${f^E}_{t,t'}$ for $t' = t+1, \ldots,\min(t+H,T)$. Figure~\ref{fig:CDF} shows five examples of empirical cumulative distribution functions for the change in wind speed, used in our experiments. It should be mentioned that if ${f^E}_{t,t'}$ becomes negative (by low chance), we just map it to $0$. Finally, after generating all forecasts, we force the observed value of the renewable energy at time $t$ to be ${f^E}_{t,t}$.

\begin{figure}[h]
    \centering
    \includegraphics[width=0.6\textwidth]{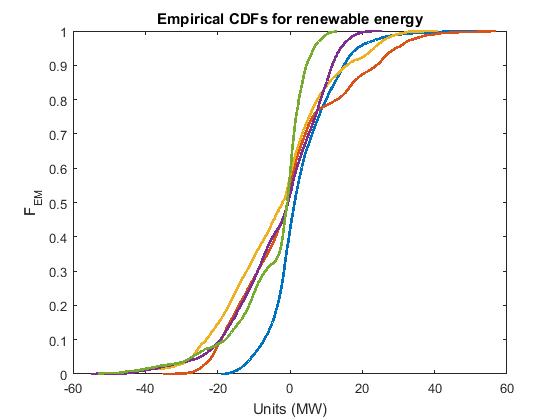}
    \caption{Empirical cumulative distribution functions from a real data set for the renewable energy.}
    \label{fig:CDF}
\end{figure}


A generated sample of observed renewable energy and its prospective forecast can be viewed in Figure~\ref{fig:renewables}.
This is an example of a complex stochastic process that causes problems for stochastic lookahead models.  For example, it is very common when using the stochastic dual dynamic programming (SDDP) to assume interstage independence, which means that $W_t$ and $W_{t+1}$ are independent, which is simply not the case in practice (\cite{shapiro2013risk} and \cite{dupavcova2002comparison}). However, capturing this dynamic in a stochastic lookahead model is quite difficult. Our CFA methodology, however, can easily handle these more complex stochastic models since we only need to be able to simulate the process in the base model.
\begin{figure}[h]
    \centering
    \includegraphics[width=0.6\textwidth]{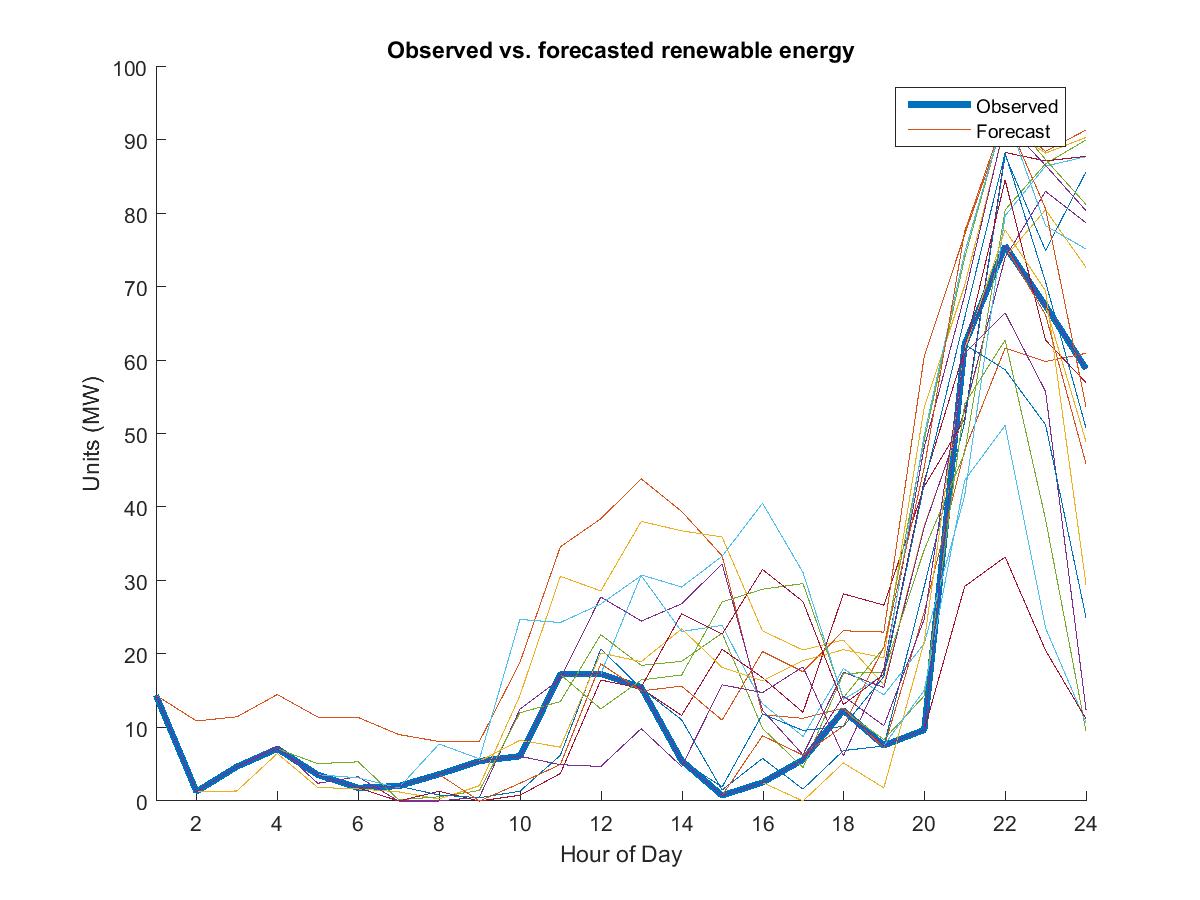}
    \caption{Generated sample of renewable energy ($E_t$)}
    \label{fig:renewables}
\end{figure}

We use the above approach in a backward format to generate demand forecasts. More specifically, assuming that $\{f^D_{t,t}\}_{t=0,\ldots,T}$ is given, we define
\beq \label{D_forecast}
f^D_{t-1,t'} = f^D_{t,t'}+trc^{t-1}_H(\bar \epsilon_{t-1,t'}) \qquad t=T,\ldots,1, \qquad t'=t,\ldots,\min(t+H,T).
\eeq
Note that since the observed demands are set in advance, we can avoid the nonnegativity issue without using the aforementioned inverse CDF of a (uniform) random variable. Indeed, since the observed demands are usually cyclic, their values are specified with a sinusoidal stochastic function:
\begin{equation}
	f^D_{t,t}= D_t = \lceil \max \{0 ,D - \hat D \sin \bigg( \frac{\bar D \pi t}{T} \bigg)+e_t \}\rceil \qquad t=0,1,\ldots,T,
\end{equation}
where $\{e_t\}_{t \ge0}$ are correlated standard normal random variables and $D >\hat D, \bar D$ are positive constants. A generated sample of observed demand and its prospective forecast can be viewed in Figure \ref{fig:demands}.
\begin{figure}[h]
    \centering
    \includegraphics[width=0.6\textwidth]{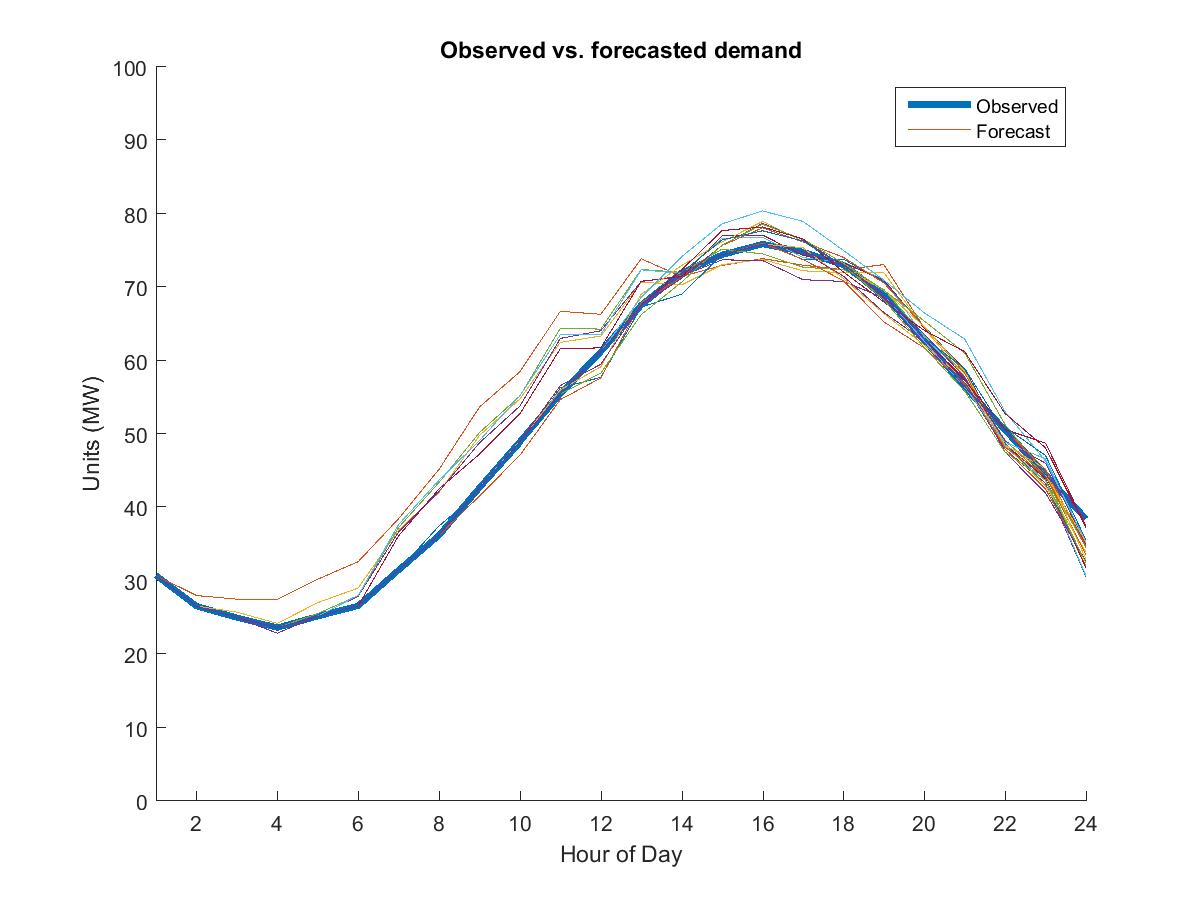}
    \caption{Generated sample of demand ($D_t$)}
    \label{fig:demands}
\end{figure}

\subsection*{$2$. Spot price model}
We assume that spot price of electricity from the grid has a positive correlation with the demand. In particular, we set
\[
f^P_{t,t'} = a+bf^D_{t,t'} \qquad t=0,1,\ldots,T, \qquad t'=t,t+1,\ldots,T,
\]
where $b>0$ and $a \sim {\cal N}(\mu_p,\sigma_p)$ for some $\mu_p,\sigma_p \ge 0$.
Moreover, the observed and forecasted market prices are fixed and set to the average of all forecasted grid prices assuming a long-term contract with the customer.

\newpage

{\bf{Appendix II}}

\subsection*{Extended Numerical Experiments}
In this Section, we provide more numerical experiments about the energy storage problem described in Section~\ref{sec_energy}. In the first one, we evaluate the performance of the constant forecast parameterization for different levels of uncertainty in forecasting the supply from renewable energy. In this case, the optimization problem is one dimensional and hence, we use a grid search to find the optimal policy. As shown in Figure~\ref{fig:single_param}, performance of the constant parameterization policy is improved by increasing the value of $\theta$ to one point and then decreased. It is also worth noting that under perfect forecasts, $\theta=1$ is the optimal value as mentioned before.

\begin{figure}[htp]
\begin{center}
                  \includegraphics[width=.45\linewidth]{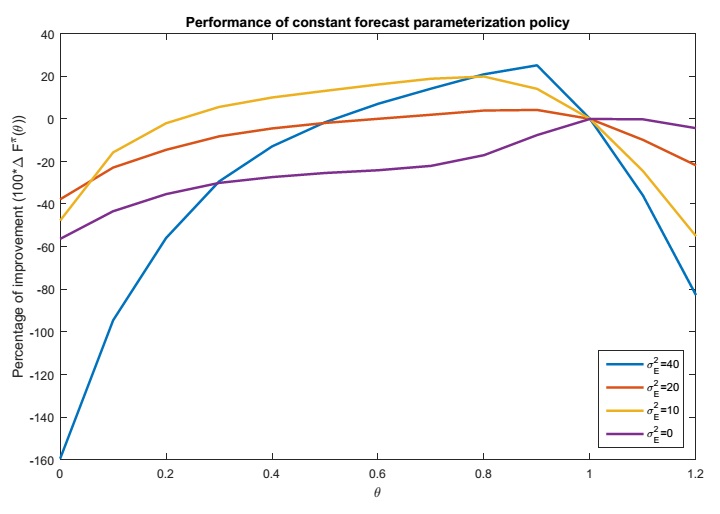}
                  \caption{Averaged performance of constant parameterization policy over $1000$ simulation.}
                  \label{fig:single_param}	
\end{center}
\end{figure}

In the next set of experiments, we provide more graphs in Figure~\ref{fig:lookup_Heatmap2} about the behaviour
of the objective function in terms of improvement over the benchmark policy for different pairs of coordinates of $\theta$ in the lookup up table representation form.

\begin{figure}
        \centering
         \includegraphics[width=0.45\linewidth]{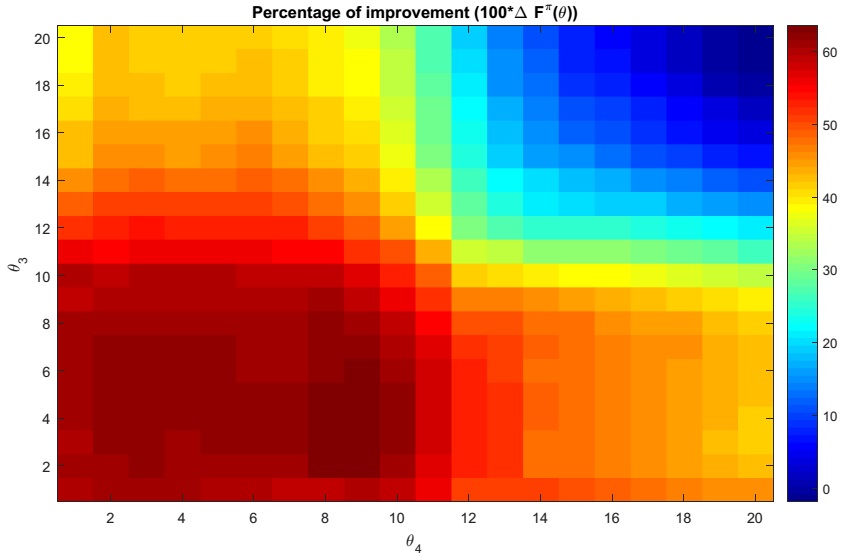}
         \includegraphics[width=0.45\textwidth]{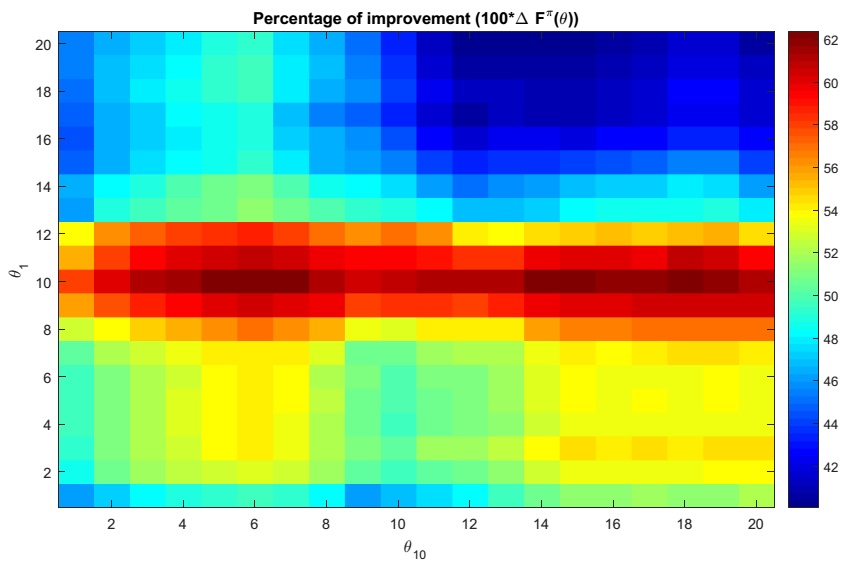}
          \includegraphics[width=0.45\textwidth]{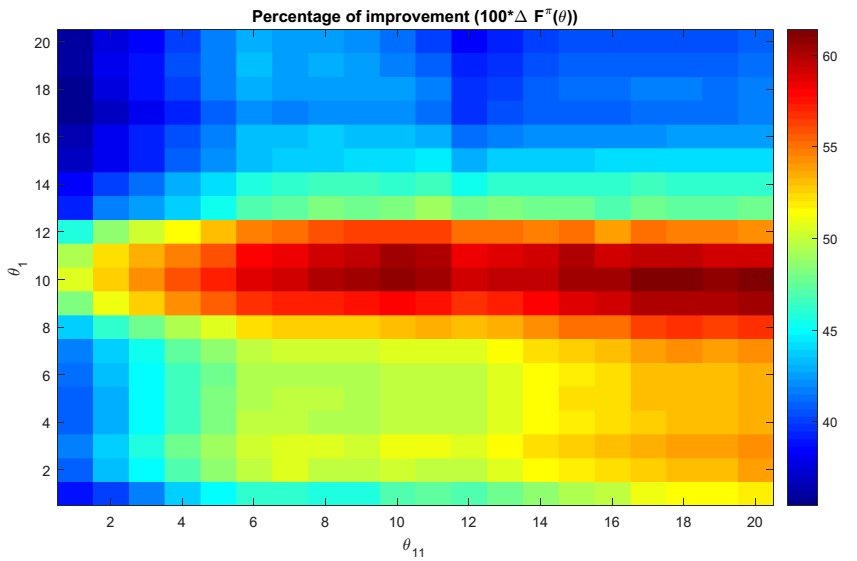}
        \includegraphics[width=0.45\linewidth]{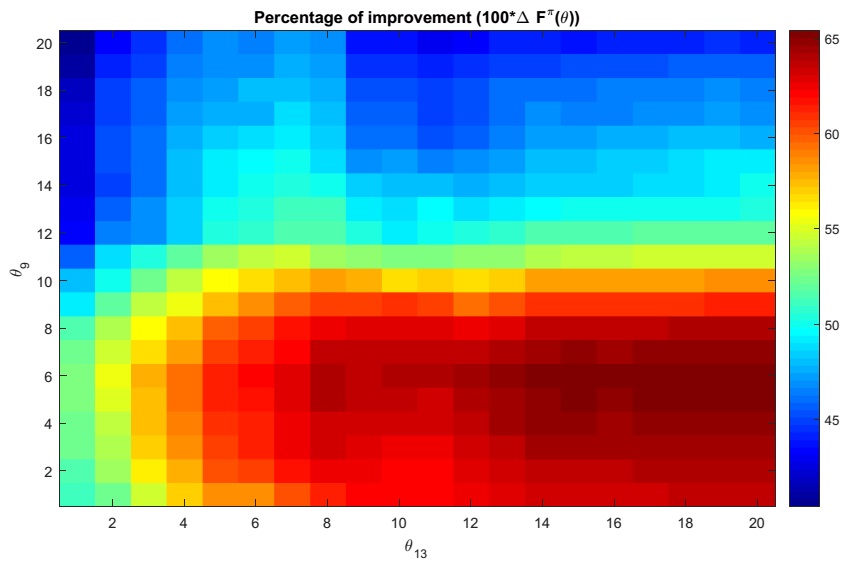}
        \caption{Averaged performance of lookup policies under noisy forecast. Each graph represents improvements over a two-dimensional grid surface of pairs of coordinates of $\theta$.}
        \label{fig:lookup_Heatmap3}
\end{figure}

\end{document}